\newtheorem*{thm*}{Theorem}
\newtheorem{thm}{Theorem}[section]
\newtheorem{lem}[thm]{Lemma}
\newtheorem{prop}[thm]{Proposition}
\newtheorem{ques}[thm]{Question}
\newtheorem{cor}[thm]{Corollary}
\newtheorem{conv}[thm]{Convention}
\theoremstyle{definition}
\newtheorem{defn}[thm]{Definition}
\theoremstyle{remark}
\newtheorem{rem}[thm]{Remark}
\numberwithin{equation}{section}
\def \Q {{\bf Q}}
\def \wh {\widehat}
\def \to {\rightarrow}
\def \N {\mathbb N}
\def \Z {\mathbb Z}
\def \d {\delta}
\def \l {\lambda}
\def \x {\bold{x}}
\def \I {\mathcal{I}}
\def \id {{\rm id}}
\newcommand*\normm[1]{\left|\mspace{-1mu}\left|\mspace{-1mu}\left|#1\right|\mspace{-1mu}\right|\mspace{-1mu}\right| }
\def \W {\mathcal{W}}
\begin{document}

\title{A pointwise cubic average for two commuting transformations}
\author{Sebasti\'an Donoso and Wenbo Sun}

 \address{Centro de Modelamiento Matem\'atico and Departamento de Ingenier\'{\i}a
Matem\'atica, Universidad de Chile, Av. Blanco Encalada 2120,
Santiago, Chile \newline  Universit\'e Paris-Est, Laboratoire d'analyse et de math\'ematiques
appliqu\'ees, 5 bd Descartes, 77454 Marne la Vall\'ee
Cedex 2, France} \email{sdonoso@dim.uchile.cl, sebastian.donoso@univ-paris-est.fr }

\address{Department of Mathematics, Northwestern University, 2033
Sheridan Road Evanston, IL 60208-2730, USA}
 \email{swenbo@math.northwestern.edu}
\maketitle

\begin{abstract}
Huang, Shao and Ye 
recently studied pointwise multiple averages by using suitable topological models. Using a notion of dynamical cubes introduced by the authors, the Huang-Shao-Ye technique and the Host machinery of magic systems, we prove that for a system $(X,\mu,S,T)$ with commuting transformations $S$ and $T$, the average \[\frac{1}{N^2} \sum_{i,j=0}^{N-1} f_0(S^i x)f_1(T^j x)f_2(S^i T^j x)\]
converges a.e. as $N$ goes to infinity for any $f_0,f_1,f_2\in L^{\infty}(\mu)$.
\end{abstract}

\section{Introduction}

\subsection{Pointwise convergence for cube averages}
A system $(X,\mathcal{X},\mu,S,T)$ with two commuting transformations $S$ and $T$ is a probability space $(X,\mathcal{X},\mu)$ endowed with two commuting measure preserving transformations $S,T \colon$ $X \to X$.
In this paper, we study the pointwise convergence of a cubic average in such a system.

The existence of the limit in $L^2$ of the averages
\begin{equation} \label{EQ1} \lim_{N\to\infty}\frac{1}{N^2} \sum_{i,j=0}^{N-1} f_0(T^i x)f_1(T^j x)f_2(T^{i+j}x)
 \end{equation}
 was proved by Bergelson \cite{B} and was generalized in \cite{HK04} and \cite{HK} to higher orders averages.
There are two possible generalizations of these averages to systems with commuting transformations: one is to study averages of the form
\begin{equation} \label{EQ1} \lim_{N\to\infty}\frac{1}{N^2} \sum_{i,j=0}^{N-1} f_0(S^i x)f_1(T^j x)f_2(R^{i+j}x)
 \end{equation}
for commuting transformations $S,T$ and $R$. Another is to study averages of the form
\begin{equation} \label{EQ2} \lim_{N\to\infty}\frac{1}{N^2} \sum_{i,j=0}^{N-1} f_0(S^i x)f_1(T^j x)f_2(S^{i}T^{j}x)
 \end{equation}
 for commuting transformations $S$ and $T$.

The existence of the pointwise limit of (\ref{EQ1}) was proved by Assani \cite{A1} for three transformations and it was generalized to an arbitrary number of transformations by Chu and Frantzikinakis \cite{CF}. It is worth noting that in fact no assumption of commutativity of the transformations is required. 

In contrast, the average (\ref{EQ2}) has a very different nature. Leibman \cite{L} showed that convergence of (\ref{EQ2}) fails (even in $L^2$) without commutativity assumptions. When the transformations commute, the $L^2$ convergence of (\ref{EQ2}) (and its higher order versions) was first proved by Austin \cite{Aus} based on the work of Tao \cite{Tao} and then by Host \cite{H} using a different method. In order to prove this result, Host introduced the notion of magic extensions, which allows one to study such averages in an extension system with convenient properties. It is natural to ask if the averages in (\ref{EQ2}) converges in the pointwise sense. In this paper, we prove:


\begin{thm} \label{THM}
Let $(X,\mu,S,T)$ be an ergodic measure preserving system with commuting transformations $S$ and $T$. Then the average  \[\frac{1}{N^2} \sum_{i,j=0}^{N-1} f_0(S^i x)f_1(T^j x)f_2(S^i T^j x)\] converges a.e. as $N$ goes to infinity for any $f_0,f_1,f_2 \in L^{\infty}(\mu)$.
\end{thm}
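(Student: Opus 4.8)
The plan is to combine the three tools named in the abstract, and then to close the gap between continuous and $L^{\infty}$ test functions by a routine maximal inequality. First I would reduce to a magic system. Pointwise convergence of these averages passes to factors: if $\pi\colon(\tilde X,\tilde\mu,\tilde S,\tilde T)\to(X,\mu,S,T)$ is a factor map and the corresponding averages converge $\tilde\mu$-a.e.\ for every bounded test function on $\tilde X$, then, applying this to the lifts $f_k\circ\pi$ (so that the averages on $\tilde X$ at $\tilde x$ agree, off a null set, with the averages on $X$ at $\pi(\tilde x)$) and pushing forward the full-measure set via $\pi_{*}\tilde\mu=\mu$, the averages converge $\mu$-a.e.\ on $X$. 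Since every ergodic system with commuting $S,T$ admits an ergodic magic extension in the sense of Host, we may and do assume $(X,\mu,S,T)$ is magic. The point of this reduction is that in a magic system the cube measure attached to the average in \eqref{EQ2} is relatively independent over a characteristic factor, which is what gives the associated cube dynamics enough rigidity for the next step.

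Next I would construct a topological model. Using the Huang--Shao--Ye technique together with the structure theory of dynamical cubes, one produces a minimal system $(\hat X,\hat S,\hat T)$ with commuting homeomorphisms and an invariant measure $\hat\mu$ making $(\hat X,\hat\mu,\hat S,\hat T)$ measurably isomorphic to the magic system, with the following key feature. On $\hat X^{4}$, with coordinates indexed by the corners of $\{0,1\}^{2}$ in the order $00,10,01,11$, set $\tau_{1}=\mathrm{id}\times\hat S\times\mathrm{id}\times\hat S$ and $\tau_{2}=\mathrm{id}\times\mathrm{id}\times\hat T\times\hat T$, let $G$ be the group generated by $\tau_{1},\tau_{2}$ and the diagonal transformations $\hat S\times\hat S\times\hat S\times\hat S$ and $\hat T\times\hat T\times\hat T\times\hat T$ (so $G\cong\Z^{4}$), and let $Q\subseteq\hat X^{4}$ be the orbit closure of the diagonal $\Delta\hat X$ under $G$ — the dynamical cube system of the authors. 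The requirement on the model is that the $\Z^{2}$-system $(Q,\tau_{1},\tau_{2})$ be uniquely ergodic. The magic hypothesis supplies the rigidity that makes this possible, and the Huang--Shao--Ye construction is what transfers unique ergodicity from the measurable to the topological category.

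Granting this, the theorem follows quickly. For $f_{1},f_{2},f_{3}\in C(\hat X)$ the function $F=\mathbf 1\otimes f_{1}\otimes f_{2}\otimes f_{3}$ is continuous on $Q$, and $\tau_{1}^{i}\tau_{2}^{j}(x,x,x,x)=(x,\hat S^{i}x,\hat T^{j}x,\hat S^{i}\hat T^{j}x)\in Q$, so unique ergodicity of $(Q,\tau_{1},\tau_{2})$ gives, for \emph{every} $x\in\hat X$,
\[
\frac1{N^{2}}\sum_{i,j=0}^{N-1}f_{1}(\hat S^{i}x)\,f_{2}(\hat T^{j}x)\,f_{3}(\hat S^{i}\hat T^{j}x)=\frac1{N^{2}}\sum_{i,j=0}^{N-1}F\bigl(\tau_{1}^{i}\tau_{2}^{j}(x,x,x,x)\bigr)\longrightarrow\int_{Q}F\,d\nu .
\]
Transporting along the isomorphism yields $\mu$-a.e.\ convergence in \eqref{EQ2} whenever $f_{1},f_{2},f_{3}$ lie in the subalgebra of $L^{\infty}(\mu)$ corresponding to $C(\hat X)$, which is dense in $L^{2}(\mu)$. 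To reach arbitrary $f_{k}\in L^{\infty}(\mu)$, use the elementary pointwise bounds
\[
\sup_{N}\Bigl|\tfrac1{N^{2}}\sum_{i,j}f_{1}(S^{i}x)f_{2}(T^{j}x)f_{3}(S^{i}T^{j}x)\Bigr|\le\|f_{2}\|_{\infty}\|f_{3}\|_{\infty}\,M_{S}|f_{1}|(x),
\]
and symmetrically $\le\|f_{1}\|_{\infty}\|f_{3}\|_{\infty}\,M_{T}|f_{2}|(x)$ and $\le\|f_{1}\|_{\infty}\|f_{2}\|_{\infty}\,(M_{S}M_{T}|f_{3}|)(x)$, where $M_{S},M_{T}$ are the one-parameter ergodic maximal functions and $ST=TS$ is used for the last one. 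These operators are bounded on $L^{2}(\mu)$, so the standard Banach-principle density argument, applied in turn to $f_{3}$, then $f_{2}$, then $f_{1}$, upgrades a.e.\ convergence from the dense class to all $f_{1},f_{2},f_{3}\in L^{\infty}(\mu)$.

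\textbf{Main obstacle.} The crux is the construction in the second paragraph: showing that in a suitable topological model of the magic system the cube system $(Q,\tau_{1},\tau_{2})$ is uniquely ergodic, equivalently that the diagonal points are generic for the cube measure. This is where Host's magic machinery and the authors' theory of dynamical cubes must do essentially all of the work. By comparison, the factor reduction, the transfer of convergence for continuous functions along the isomorphism, and the maximal-inequality/density argument are routine.
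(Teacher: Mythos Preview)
Your central claim---that in a suitable model the $\Z^{2}$-system $(Q,\tau_{1},\tau_{2})$ is uniquely ergodic---is false, and not for a technical reason. Both $\tau_{1}$ and $\tau_{2}$ act as the identity on the first coordinate, so for each $x_{0}\in\hat X$ the slice $\{x_{0}\}\times\hat X^{3}\cap Q$ is $(\tau_{1},\tau_{2})$-invariant and carries its own invariant measures; unless $\hat X$ is a point there are uncountably many ergodic $(\tau_{1},\tau_{2})$-measures on $Q$. Equivalently: the limit in Theorem~\ref{THM} genuinely depends on $x$ (take $f_{3}=hh'$ with $h$ $T$-invariant and $h'$ $S$-invariant; the average factorizes into two one-parameter Birkhoff averages whose limits are $\mathbb{E}(f_{1}h\,|\,\mathcal{I}_{S})(x)$ and $\mathbb{E}(f_{2}h'\,|\,\mathcal{I}_{T})(x)$), whereas unique ergodicity of $(Q,\tau_{1},\tau_{2})$ would force a constant limit $\int_{Q}F\,d\nu$ for every $x$. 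So the shortcut of reading the two-parameter average directly off a uniquely ergodic cube system cannot work.

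What the paper actually proves is strict ergodicity of $(Q,\mathcal{G}_{S,T})$ for the full $\Z^{4}$-action (your group $G$), and this only yields convergence of four-parameter averages such as $S_{N}(f,x)$ in Lemma~\ref{BoundAverage}. Bridging the gap to the two-parameter average requires two further ingredients you are missing. First, a decomposition of $f_{3}$ according to the factor $\W=\mathcal{I}_{S}\vee\mathcal{I}_{T}$: when $f_{3}$ is $\W$-measurable one approximates by tensors $hh'$ as above and uses two separate Birkhoff theorems. Second, when $\mathbb{E}(f_{3}|\W)=0$, a Cauchy--Schwarz/van der Corput estimate (Lemma~\ref{BoundAverage}) bounds the fourth power of the two-parameter average by the four-parameter quantity $S_{N}(f_{3},x)$; unique ergodicity of the $\Z^{4}$-action then shows $S_{N}(\hat f_{3},x)\to\normm{\hat f_{3}}_{\mu,S,T}^{4}$ for continuous $\hat f_{3}$, and the \emph{magic} hypothesis (this is where it is actually used) gives $\normm{f_{3}}_{\mu,S,T}=0$, so the average tends to $0$ a.e. Your reduction to a magic extension and your density/maximal argument are fine, but the heart of the proof---linking the two-parameter average to the four-parameter cube average and then invoking the magic property to kill the seminorm---is absent from your plan.
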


Recently Huang, Shao and Ye \cite{HSY2} proved the pointwise convergence of multiple averages for a single transformation on a distal system. So a natural question arises from Theorem \ref{THM}: If $(X,\mu,S,T)$ is an ergodic measure preserving system with commuting transformations $S$ and $T$, does the average 
\[ \frac{1}{N} \sum_{i=0}^{N-1} f_0(S^ix)f_1(T^ix)\] 
converge in the pointwise sense as $N$ goes to infinity? Very little is known towards this question up to now. In \cite{DT}, Demeter and Thiele obtained the pointwise convergence a variation of this average. The case when $S$ and $T$ are powers of some ergodic transformation was solved by Bourgain \cite{Bo}, but no further results were known. 

\subsection{Strict ergodicity for dynamical cubes}
The main ingredient in proving Theorem \ref{THM} is to find a suitable topological model for the original system. This means finding a measurable conjugacy to a space with a convenient topological structure. Jewett-Krieger's Theorem states that every ergodic system has a strictly ergodic model (see Section 2.2 for definitions) and it is known that one can add some additional properties to the topological model.

In this paper, we are interested in the strict ergodicity property of the {\it dynamical cube} space of a topological model.
Let $X$ be a compact metric space and $S, T\colon X\to X$ be two commuting homeomorphisms. The dynamical cube space $\bold{Q}_{S,T}(X)$ is defined to be
$$\bold{Q}_{S,T}(X)=\overline{\{(x,S^ix,T^jx,S^iT^jx)\colon x\in X, i,j\in \Z\}}.$$
This object was introduced in \cite{DS} motived by Host's work \cite{H} and results in a useful tool to study products of minimal systems and their factors. A classical argument using Birkhoff Ergodic Theorem (see, for example, the proof of Theorem \ref{example}) shows that the strict ergodicity property of $\bold{Q}_{S,T}(X)$ is connected to pointwise multiple convergence problems such as Theorem \ref{THM} and Theorem \ref{example}.
We ask the following question:

\begin{ques}
  For any ergodic system $(X,\mu,S,T)$ with two commuting transformations $S$ and $T$, is there a topological model $(\wh{X},\wh{S},\wh{T})$ of $X$ such that $(\Q_{S,T}(\widehat{X}),\mathcal{G}_{\wh{S},\wh{T}})$ is strictly ergodic? Here $\mathcal{G}_{\wh{S},\wh{T}}$ is the group of action generated by $\id\times\wh{S}\times \id \times \wh{S}$, $\id\times\id\times \wh{T} \times \wh{T}$ and $\wh{R}\times \wh{R}\times \wh{R}\times \wh{R}$, where $\wh{R}=\wh{S}$ or $\wh{T}$.
\end{ques}

Huang, Shao and Ye \cite{HSY} gave an affirmative answer to this question for the case $S=T$. Although this question remains open in the general case, such a model always exists in an extension system of the original one. We prove the following theorem, which is the main tool to study Theorem \ref{THM}:
\begin{thm}\label{thm2}
  For any ergodic system $(X,\mu,S,T)$ with two commuting transformations $S$ and $T$, there exists an extension system $(Y,\nu,S,T)$ of $X$ and a topological model $(\wh{Y},\wh{S},\wh{T})$ of $Y$ such that $(\Q_{S,T}(\widehat{Y}),\mathcal{G}_{S,T})$ is strictly ergodic.
\end{thm}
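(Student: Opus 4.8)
The plan is to combine Host's machinery of magic systems \cite{H} with an iterated Jewett--Krieger construction in the spirit of Huang, Shao and Ye \cite{HSY}. Roughly: first I would replace $(X,\mu,S,T)$ by a suitable extension on which the cube measure governing the average~(\ref{EQ2}) is ergodic for $\mathcal{G}_{S,T}$, and then I would build the topological model by an inverse-limit procedure in which the orbit closure $\Q_{S,T}$ of successive models is squeezed down onto the support of that cube measure while all extra $\mathcal{G}_{S,T}$-invariant measures are eliminated.

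For the first part I would recall from Host \cite{H} that to $(X,\mu,S,T)$ one attaches a measure $\mu_{S,T}$ on $X^{\{0,1\}^2}$, invariant under $\mathcal{G}_{S,T}$ and controlling the averages~(\ref{EQ2}), and that $X$ has a \emph{magic} extension $(Y,\nu,S,T)$ --- an ergodic extension --- on which $\nu_{S,T}$ has Host's conditional-independence property. Using that property (together, if needed, with a routine further reduction) I would check that the abstract cube system $(Y^{\{0,1\}^2},\nu_{S,T},\mathcal{G}_{S,T})$ is ergodic. I would also record that each one-dimensional coordinate projection $Y^{\{0,1\}^2}\to Y$ is a factor map of measure-preserving systems; e.g. the coordinate indexed by $(1,1)$ carries the generators $\mathrm{id}\times S\times\mathrm{id}\times S$, $\mathrm{id}\times\mathrm{id}\times T\times T$ and $R\times R\times R\times R$ of $\mathcal{G}_{S,T}$ to $S$, $T$, $R$ on $Y$, so that $(Y,\nu,S,T)$ is a factor of the cube system. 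This compatibility is what will let me keep the topological models of $Y$ and of its cube system synchronized.

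For the second part, using the Jewett--Krieger theorem for the amenable groups $\mathbb{Z}^2$ and $\mathcal{G}_{S,T}$ together with its relative and function-prescribing versions, I would build by induction two interlocking inverse sequences: strictly ergodic topological models $\widehat Y_1 \leftarrow \widehat Y_2 \leftarrow \cdots$ of $(Y,\nu,S,T)$ (the bonding maps compatible with the isomorphisms onto $Y$), and strictly ergodic topological models $\widehat Q_1 \leftarrow \widehat Q_2 \leftarrow \cdots$ of the abstract cube system, arranged so that at each stage there are continuous $\mathcal{G}_{S,T}$-factor maps $\widehat Q_n \to \Q_{S,T}(\widehat Y_n)$ and, crucially, $\Q_{S,T}(\widehat Y_{n+1}) \to \widehat Q_n$. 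The second map is the new ingredient at each step: one takes a countable family of continuous functions separating the points of $\widehat Q_n$, transports them through the coordinate maps to (a priori only measurable) functions on $Y$, and invokes the function-prescribing Jewett--Krieger theorem to get a refined model $\widehat Y_{n+1}$ of $Y$ on which all of them are continuous and $\widehat Y_{n+1}\to\widehat Y_n$ is continuous; on such a model $\Q_{S,T}(\widehat Y_{n+1})$ carries enough continuous functions to surject continuously and equivariantly onto $\widehat Q_n$. Setting $\widehat Y = \varprojlim \widehat Y_n$, one shows $\Q_{S,T}(\widehat Y) = \varprojlim \Q_{S,T}(\widehat Y_n)$ (the orbit closure of the cubes over an inverse limit is the inverse limit of the orbit closures, by a diagonal-sequence argument in the spirit of \cite{DS}), which by the interleaving equals $\varprojlim \widehat Q_n$. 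An inverse limit of strictly ergodic $\mathcal{G}_{S,T}$-systems is strictly ergodic, and its unique invariant measure projects to $\nu_{S,T}$ at every stage, hence is the cube measure of $\widehat Y$; so $(\Q_{S,T}(\widehat Y),\mathcal{G}_{S,T})$ is strictly ergodic while $(\widehat Y,\widehat S,\widehat T)$ is a topological model of $Y$, which is what we want.

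I expect the main obstacle to be exactly the construction of the maps $\Q_{S,T}(\widehat Y_{n+1}) \to \widehat Q_n$ and the proof that the squeeze closes up in the limit. A priori the orbit closure $\Q_{S,T}(\widehat Y_n)$ of an arbitrary model properly contains the support of $\nu_{S,T}$ and carries many $\mathcal{G}_{S,T}$-invariant measures, and $\widehat Y \mapsto \Q_{S,T}(\widehat Y)$ is not functorial in any naive way, so one must argue carefully that refining the model of $Y$ forces its cube space to surject onto a prescribed strictly ergodic model of the cube measure system, and that these surjections become isomorphisms in the inverse limit. This is the Huang--Shao--Ye mechanism --- a Baire-category / uniform-convergence argument over a countable dense family of continuous functions, fed by the ergodicity of $\nu_{S,T}$ obtained from the magic extension --- and the real work is to verify that it still runs when the face group of a single transformation is replaced by the group $\mathcal{G}_{S,T}$ attached to the commuting pair $S,T$, and that Host's magic systems indeed supply the needed ergodicity of the cube measure.
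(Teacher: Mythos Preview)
Your approach is genuinely different from the paper's, and it is worth spelling out why.

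\textbf{What the paper actually does.} There is no inverse limit and no iterated Jewett--Krieger construction. After passing to a \emph{free} ergodic magic extension (Theorem~\ref{MagicFree}; freeness is needed for Weiss--Rosenthal and you do not mention it), the key structural fact is Lemma~\ref{ProductFactor}: for a magic system the characteristic factor $\mathcal{Z}_{S,T}$ attached to $\mathcal{I}_S\vee\mathcal{I}_T$ is a \emph{product} $(Y\times W,\sigma\times\id,\id\times\tau)$ of two one-dimensional ergodic systems. One takes Jewett--Krieger models of $Y$ and $W$ separately, forms their product, and applies Weiss--Rosenthal \emph{once} to model the factor map $X\to Y\times W$. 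Unique ergodicity of $(\Q_{S,T}(\widehat X),\mathcal{G}_{S,T})$ is then proved \emph{directly} by a two-step disintegration argument: first $(\Q_S(\widehat X),\mathcal{G}_S)$ is uniquely ergodic with measure $\mu_S$, then $(\Q_{S,T}(\widehat X),\mathcal{G}_{S,T})$ is uniquely ergodic with measure $\mu_{S,T}$, using that the $\sigma$-algebra of $(T\times T)$-invariant sets on $(X^2,\mu_S)$ is already $\mathcal{W}^2$-measurable (Lemma~\ref{Measurable}) and that $\Q_{S,T}$ of a product system is uniquely ergodic by an elementary computation (Proposition~\ref{Q.u.e}). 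The magic hypothesis is used precisely to get this product characteristic factor and Lemma~\ref{Measurable}; that is what makes the disintegration close up.

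\textbf{Gaps in your sketch.} Two of your steps are not routine. First, you assume that for a magic extension the abstract cube system $(Y^{\{0,1\}^2},\nu_{S,T},\mathcal{G}_{S,T})$ is ergodic, but this is not obvious from Host's definition; in the paper it comes out only as a \emph{consequence} of the unique ergodicity proved by the disintegration argument above, so invoking it as an input to Jewett--Krieger is circular unless you supply an independent proof --- and the natural independent proof is exactly the disintegration argument the paper gives, which already lives on a single model. Second, your construction of the maps $\Q_{S,T}(\widehat Y_{n+1})\to\widehat Q_n$ by ``transporting continuous functions on $\widehat Q_n$ through the coordinate maps to functions on $Y$'' is not well-defined as stated: coordinate projections go from the cube to $Y$, so functions move the other way, and in any case making finitely many $f_i$ continuous on a refined model of $Y$ controls only tensor-product functions on $\widehat Y_{n+1}^4$, not on the orbit closure $\Q_{S,T}(\widehat Y_{n+1})$, which a priori strictly contains the support of $\nu_{S,T}$. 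The HSY mechanism does address this for a single transformation, but porting it to $\mathcal{G}_{S,T}$ is real work that you have not done, whereas the paper's product-factor route avoids the issue entirely.
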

It is worth noting that since every measurable function on the original system can be naturally lifted to a function on the extension system, this result is already sufficient for our purposes.

\subsection{Proof Strategy and organization}
Conventions and background material are in Section 2.
To prove Theorem \ref{thm2}, we refine the technique of Host in \cite{H} to find a suitable magic extension of the original system in Section 3. Then we use the method of Huang, Shao and Ye \cite{HSY} to find a desired model for this extension system in Section 4.
The announced pointwise convergence result (Theorem \ref{THM}) follows from Theorem \ref{thm2}, and
 we explain how this is achieved in Section 5.




\section{Background Material}
\subsection{Measure preserving systems}
A {\it measure preserving system} is a 4-tuple $(X,\mathcal{X},\mu,G_0)$, where $(X,\mathcal{X},\mu)$ is a probability space and $G_0$ is a group of measurable, measure preserving transformations acting on $X$.  When there is no confusion, we omit the $\sigma$-algebra $\mathcal{X}$ and assume without lose of generality that the probability space is standard.

A measure preserving system $(X,\mu,G_0)$ is {\it ergodic} if any $G_0$-invariant set of $X$ has measure 0 or 1.

If $T\colon X\to X$ is an invertible, measurable, measure preserving transformation, we let $(X,\mu,T)$ denote the measure preserving system $(X,\mu,\{T^i:i\in \Z\})$. If $S\colon X \to X$ and $T\colon X\to X$ are two commuting measure preserving transformations of $X$ (i.e $ST=TS$),  we write $(X,\mu,S,T)$ to denote the measure preserving system $(X,\mu,\{S^iT^j: i,j\in \Z \})$.

A {\it factor map} between the measure preserving systems $(Y,\nu,G_0)$ and $(X,\mu,$ $G_0)$ is a measure preserving map $\pi\colon Y \to X$ such that $\pi\circ g=g\circ\pi$ for all $g\in G_{0}$. If $\pi$ is a bi-measurable bijection, we say that $\pi$ is an {\it isomorphism} and that $(Y,\nu,G_0)$ and $(X,\mu,G_0)$ are {\it isomorphic}.



\subsection{Topological dynamical systems and models}

A {\it topological dynamical system} is a pair $({X},{G}_{0})$, where ${X}$ is a compact metric space and ${G}_{0}$ is a group of homeomorphisms of the space ${X}$.  A topological system $({X},G_0)$ is {\it minimal} if for any $x\in {X}$, its orbit $\{gx: g\in G_0\}$ is dense in ${X}$.

If ${S}\colon {X}\to {X}$ and ${T}\colon {X}\to{X}$ are two commuting homeomorphisms of ${X}$,  we write $({X},{T})$ to denote $({X},\{{T}^n:n\in \Z\})$  and $({X},{S},{T})$ to denote $({X},\{{S}^n{T}^m: n,m\in \Z \})$. Since we deal with both measure preserving systems and topological dynamical systems, we always write the measure for a measure preserving system to distinguish them.

\begin{conv}
 Throughout this paper, when we consider a system (measurable or topological) $(X,\mu,S,T)$ with commuting transformations $S$ and $T$, we always use $G\cong\mathbb{Z}^2$ to denote the group generated by $S$ and $T$.
\end{conv}

A (topological) {\it factor map} between the topological dynamical systems $(Y,G_{0})$ and $(X,G_{0})$ is an onto, continuous map $\pi\colon Y\to X$ such that $\pi\circ g=g\circ\pi$ for all $g\in G_{0}$. We say that $(Y,G_{0})$ is an {\it extension} of $(X,G_{0})$ or that $(X,G_{0})$ is a {\it factor} of $(Y,G_{0})$. When $\pi$ is bijective, we say that $\pi$ is an {\it (topological) isomorphism} and that $(Y,G_0)$ and $(X,G_0)$ are {\it (topological) isomorphic}.

By the Krylov-Bogolyubov Theorem, every topological dynamical system $({X},G_0)$ admits an invariant measure. When this measure is unique, we say that $({X},G_0)$ is {\it uniquely ergodic}. In addition, we say that $({X},G_0)$ is {\it strictly ergodic} if it is minimal and uniquely ergodic.

We state here a well known theorem for the case when $G_0$ is spanned by $d$ commuting transformations $T_1,\ldots, T_d$.

\begin{thm} Let $({X},G_0)$ be a topological dynamical system. The following are equivalent
\begin{enumerate}
\item $({X},G_0)$ is uniquely ergodic.

\item For any continuous function $f$, the average \[ \frac{1}{N^d} \sum_{i_1,\ldots,i_d \in [0,N-1]}  f(T_1^{i_1}\cdots T_d^{i_d}x) \]
converges uniformly to $\int f d\mu$ as $N$ goes to infinity.

\end{enumerate}

\end{thm}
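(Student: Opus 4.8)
The plan is to establish the two implications separately. For $(2)\Rightarrow(1)$, let $\nu$ be any $G_0$-invariant Borel probability measure on $X$ and let $f\in C(X)$. Writing $A_N f(x)=\frac{1}{N^d}\sum_{i_1,\ldots,i_d\in[0,N-1]}f(T_1^{i_1}\cdots T_d^{i_d}x)$, the invariance of $\nu$ under each $T_j$ gives $\int f\,d\nu=\int A_N f\,d\nu$ for every $N$. Since $A_N f\to\int f\,d\mu$ uniformly by hypothesis, letting $N\to\infty$ yields $\int f\,d\nu=\int f\,d\mu$. As this holds for every $f\in C(X)$, the Riesz representation theorem forces $\nu=\mu$, so $(X,G_0)$ is uniquely ergodic.

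For $(1)\Rightarrow(2)$, let $\mu$ be the unique invariant measure and suppose, for contradiction, that $A_N f$ fails to converge uniformly to $\int f\,d\mu$ for some $f\in C(X)$. Then there are $\varepsilon>0$, integers $N_k\to\infty$, and points $x_k\in X$ with $\lvert A_{N_k}f(x_k)-\int f\,d\mu\rvert\geq\varepsilon$. Put $\nu_k=\frac{1}{N_k^d}\sum_{i\in[0,N_k-1]^d}\delta_{T^ix_k}$, where $T^i=T_1^{i_1}\cdots T_d^{i_d}$. By weak-$*$ compactness of the space of Borel probability measures on the compact metric space $X$, after passing to a subsequence we may assume $\nu_k\to\nu$ weak-$*$.

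The crucial point is that $\nu$ is $G_0$-invariant. Fix a generator $T_j$ and $g\in C(X)$. Then $\int g\,d\nu_k-\int g\,d((T_j)_*\nu_k)=\frac{1}{N_k^d}\sum_{i}\bigl(g(T^ix_k)-g(T^{i+e_j}x_k)\bigr)$, where $e_j$ is the $j$-th coordinate vector; reindexing the second sum, the index sets $[0,N_k-1]^d$ and $[0,N_k-1]^d+e_j$ agree except on two slabs of $N_k^{d-1}$ points each, so the above difference has absolute value at most $2\|g\|_\infty/N_k\to0$. Passing to the limit, $\int g\,d\nu=\int g\,d((T_j)_*\nu)$ for all $g\in C(X)$ and all $j$, hence $\nu$ is invariant under each $T_j$ and therefore under $G_0$. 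By unique ergodicity $\nu=\mu$, so $A_{N_k}f(x_k)=\int f\,d\nu_k\to\int f\,d\mu$, contradicting the choice of $x_k$.

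I expect the only subtle step to be the boundary (slab) estimate showing that the weak-$*$ limit of the empirical measures is invariant; the remainder is a routine combination of weak-$*$ compactness and the Riesz representation theorem. I also note that the implication $(2)\Rightarrow(1)$ goes through if one only assumes pointwise convergence of $A_N f$ to $\int f\,d\mu$ everywhere on $X$ (using dominated convergence in place of uniform convergence), so the two convergence modes are in fact equivalent here.
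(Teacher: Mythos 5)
Your proof is correct: both implications are the standard argument (invariant measures integrate the averages to themselves, plus Riesz, for $(2)\Rightarrow(1)$; empirical measures, weak-$*$ compactness, and the $2\|g\|_\infty/N_k$ boundary estimate for $(1)\Rightarrow(2)$), and the side remark about pointwise convergence sufficing for $(2)\Rightarrow(1)$ is also right. The paper states this theorem as a well-known fact without proof, so there is nothing to compare against; your argument is exactly the classical one being invoked.
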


 A deep connection between measure preserving systems and topological dynamical systems is the Jewett-Krieger Theorem \cite{J,K} which asserts that every ergodic system $(X,\mu,T)$ is isomorphic to a strictly ergodic topological dynamical system  $(\wh{X},\wh{\mu},\wh{T})$, where $\wh{\mu}$ is the unique ergodic measure of $(\wh{X},\wh{T})$. We say that $(\wh{X},\wh{T})$ is a {\it topological model} for $(X,\mu,T)$.

Further refinements have been given to the Jewett-Krieger Theorem. We state the one which is useful for our purposes.

\begin{defn}
 Let $(X,\mu,G_0)$ be a measure preserving system. We say that $G$ acts {\it freely} on $X$ (or the system $(X,\mu,G_0)$ is {\it free}) if any non-trivial $g\in G$ defines a transformation different from the identity transformation on $X$.
\end{defn}

Particularly, we say that a system $(X,\mu,S,T)$ with commuting transformations is {\it free} if $S^iT^j$ is not the identity transformation on $X$ for any $(i,j)\neq (0,0)$.

\begin{thm}[Weiss-Rosenthal \cite{W}] \label{WeissRosenthal}
Let $G_0$ be an amenable group and let $\pi\colon Y \to X$ be a factor map between two measure preserving systems $(Y,\nu,G_0)$ and $(X,\mu,G_0)$. Suppose that $(X,\mu,G_0)$ is free and $(\wh{X},\wh{G}_0)$ is a strictly ergodic model for $(X,\mu,G_0)$. Then there exits a strictly ergodic model $(\wh{Y}, G_0)$ for $(Y,\nu,G_0)$ and a topological factor map $\wh{\pi}:\wh{Y}\to \wh{X}$ such that the following diagram commutes:

\begin{figure}[h]
 \begin{tikzpicture}
  \matrix (m) [matrix of math nodes,row sep=3em,column sep=4em,minimum width=2em,ampersand replacement=\&]
  {
     X \& \widehat{X} \\
     Y\times W \& \widehat{Y}\times \widehat{W} \\};
  \path[-stealth]
     (m-1-1) edge node [left] {$\pi$} (m-2-1)
    (m-1-1) edge node [above] {$\Phi$} (m-1-2)
    (m-1-2) edge (m-1-1)
    (m-2-2) edge (m-2-1)
    (m-1-2) edge node [right] {$\widehat{\pi}$} (m-2-2)
    (m-2-1) edge node [below] {$\phi$} (m-2-2);
\end{tikzpicture}
 \end{figure}


Here we mean that $\Phi$ and $\phi$ are measure preserving isomorphisms and $\pi\circ \Phi=\phi \circ \widehat{\pi}$.

\end{thm}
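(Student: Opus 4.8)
The statement is a \emph{relative} Jewett--Krieger theorem, and the plan is to prove it along the lines of Weiss's construction of strictly ergodic models over a fixed base, with the Ornstein--Weiss quasi-tiling machinery replacing the classical Rokhlin lemma so as to handle the amenable group $G_0$. The organizing principle is that giving a topological model of a measure preserving system amounts to giving a refining sequence of finite partitions that generates the $\sigma$-algebra modulo null sets; the resulting model is minimal and uniquely ergodic as soon as one arranges that, along a fixed F{\o}lner sequence in $G_0$, the empirical frequencies of all finite ``names'' of these partitions converge \emph{uniformly}, and that every name which occurs does so with bounded gaps. Since the model $(\wh X,\wh G_0)$ of $X$ and the isomorphism $\phi\colon X\to\wh X$ are given, the content of the theorem is to build such a partition sequence on $Y$ that lies over the one coming from $\wh X$.

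Concretely, I would fix a refining sequence of clopen partitions $\mathcal Q_1\le\mathcal Q_2\le\cdots$ of $\wh X$ generating its topology, pull them back to $Y$ via $\phi\circ\pi$, and then seek finite partitions $\mathcal P_1\le\mathcal P_2\le\cdots$ of $Y$ with $\mathcal P_n$ refining the pullback of $\mathcal Q_n$ and with $\bigvee_n\mathcal P_n$ generating $\mathcal Y$ modulo $\nu$. Each $\mathcal P_n$ induces a factor map from $Y$ onto a closed $G_0$-invariant set $\wh Y_n\subseteq\wh X\times\Lambda_n^{G_0}$ over a finite alphabet $\Lambda_n$, whose first coordinate projection to $\wh X$ extends $\phi$; compatibility of the $\mathcal P_n$ yields a tower $\cdots\to\wh Y_{n+1}\to\wh Y_n\to\wh X$, and the desired model $\wh Y$ is the inverse limit of this tower, with $\wh\pi$ the projection to $\wh X$. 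An inverse limit of uniquely ergodic systems over a uniquely ergodic one is uniquely ergodic, minimality is preserved by the construction, and the generating property furnishes the measure isomorphism $\Phi\colon(Y,\nu)\to(\wh Y,\wh\nu)$ making the square in the statement commute.

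The heart of the matter is the following approximation step: given a finite partition $\mathcal R$ of $Y$, an index $n$, and $\varepsilon>0$, produce a finite partition $\mathcal P\ge(\phi\circ\pi)^{-1}\mathcal Q_n$ with $\nu(\mathcal P\,\triangle\,\mathcal R)<\varepsilon$ whose induced symbolic extension $\wh Y_{\mathcal P}$ of $\wh X$ is strictly ergodic. To build $\mathcal P$ I would invoke the Rokhlin lemma for the free action of the amenable group $G_0$ on $Y$ --- the freeness of $X$, hence of $Y$, is exactly what provides Rokhlin towers with arbitrarily small error set --- quasi-tiling $Y$ by translates of finitely many large F{\o}lner shapes with a tiny leftover set, reading off on the base of each tower a partition matching $\mathcal R$, and transporting it along the tower so that the $\mathcal P$-name of a point is, outside the leftover set, determined only by which tile contains it. Almost-periodicity of the quasi-tiling along the F{\o}lner sequence then forces all pattern frequencies to converge uniformly, i.e.\ $\wh Y_{\mathcal P}$ is uniquely ergodic, and seeding the construction so that every admissible finite word recurs with bounded gaps upgrades this to minimality. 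Iterating the step with a summable sequence $\varepsilon_n$, while at each stage also refining $\mathcal P_{n-1}$ and a fixed generating sequence for $\mathcal Y$, produces the chain $\mathcal P_1\le\mathcal P_2\le\cdots$.

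The main obstacle is precisely this approximation step: one must keep the full combinatorial control needed for uniform frequencies --- the Ornstein--Weiss tiling estimates, which are genuinely delicate for a general amenable $G_0$ --- while simultaneously honoring the two rigid external constraints, namely refining the pullback from the \emph{already fixed} model $\wh X$ and refining the previous $\mathcal P_{n-1}$ so that the symbolic extensions are honestly compatible. Carrying out the painting of the tower bases ``over $\wh X$'', that is, measurably with respect to $\phi\circ\pi$ and without spoiling uniformity, is where the bookkeeping concentrates; once that lemma is in hand, the inverse limit, the isomorphism $\Phi$, and the commutativity of the diagram are formal.
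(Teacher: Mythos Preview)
The paper does not give its own proof of this theorem: it is quoted as a known result and attributed to Weiss--Rosenthal via the citation~\cite{W}, and is then used as a black box (in Lemma~\ref{TopologicalModel}) to lift a strictly ergodic model of the factor $Y\times W$ to a strictly ergodic model of $X$. So there is nothing in the paper to compare your argument against.

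That said, your outline is the standard route to the relative Jewett--Krieger theorem for free actions of amenable groups, and the organization you describe---pull back a generating clopen chain from $\wh X$, successively modify a generating chain on $Y$ so that each stage refines the pullback, has uniform $G_0$-name frequencies along a F{\o}lner sequence, and has syndetic recurrence of every admissible word, then pass to the inverse limit of the symbolic extensions over $\wh X$---is exactly the Weiss strategy. You also correctly isolate the genuine work: the ``painting'' lemma over the fixed base, where one must run the Ornstein--Weiss quasi-tiling argument on $(Y,\nu,G_0)$ while keeping the new partition $\mathcal P$ both $\varepsilon$-close to a target $\mathcal R$ and finer than $(\phi\circ\pi)^{-1}\mathcal Q_n$ and $\mathcal P_{n-1}$. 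One small point worth flagging in your write-up: freeness is assumed in the paper only on the \emph{base} $(X,\mu,G_0)$, and you silently use freeness on $Y$ to invoke the Rokhlin/Ornstein--Weiss lemma there; this is fine, since a factor of a non-free action cannot be free, so freeness of $X$ forces freeness of $Y$, but it is worth saying explicitly.
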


In this case, we say that $\wh{\pi}\colon \wh{Y}\to \wh{X}$ is a {\it topological model} for $\pi\colon Y\to X$.




\subsection{Host magic extensions}
The Host magic extension was first introduced in \cite{H} to prove the $L^2$ convergence of multiple ergodic averages for systems with commuting transformations. Then Chu \cite{Chu} used this tool to study the recurrence problems in the same setting of systems. We recall that this construction is valid for an arbitrary number of transformations, but for convenience we state it only for two transformations $S$ and $T$.

\subsubsection{The Host measure}
\begin{defn}
  For any measure preserving transformation $R$ of the system $(X,\mathcal{X},\mu)$, we let $\mathcal{I}_R$ denote the $\sigma$-algebra of $R$-invariant sets.
\end{defn}

Let $X^{\ast}$ denote the space $X^4$. Let $\mu_S$ be the relative independent square of $\mu$ over $\mathcal{I}_S$,
meaning that for all $f_0,f_1\in L^{\infty}(\mu)$ we have $$ \int_{X^2} f_0(x_0)f_1(x_1)d\mu_S=\int_{X} \mathbb{E}(f_1|\mathcal{I}_S)\mathbb{E}(f_1|\mathcal{I}_S)d\mu,$$
where $\mathbb{E}(f|\mathcal{I}_S)$ is the {\it conditional expectation} of $f$ on $\mathcal{I}_S$.
It is obvious that $\mu_{S}$ is invariant under $\id \times S$ and $g\times g$ for $g\in G$.

Let $\mu_{S,T}$ denote the relative independent square of $\mu_{S}$ over $\mathcal{I}_{T\times T}$. Hence for all $f_0,f_1,f_2,f_3\in L^{\infty}(\mu)$ we have that
$$\int_{X^4}f_0(x_0)f_1(x_1)f_2(x_2)f_3(x_3)d\mu_{S,T}=\int_{X^2} \mathbb{E}(f_0\otimes f_1|\mathcal{I}_{T\times T})\mathbb{E}(f_2\otimes f_3|\mathcal{I}_{T\times T})d\mu_{S}.  $$
The measure $\mu_{S,T}$ is invariant under $\id\times S\times \id \times S$, $\id\times \id \times T \times T$ and under $g\times g\times g\times g $ for all $g\in G$.

Let ${S^{\ast}}$ and ${T^{\ast}}$ denote the transformations $\id\times S\times \id \times S$ and $\id\times \id \times T \times T$ respectively.
Then $(X^{\ast},\mu_{S,T}, S^{\ast},T^{\ast})$ is a system with commuting transformations $S^{\ast}$ and $T^{\ast}$. Let $\pi$ denote the projection $(x_0,x_1,x_2,x_3)\to x_3$ from ${X}^{\ast}$ to $X$. Then $\pi$ defines a factor map between $(X^{\ast},\mu_{S,T},  S^{\ast},T^{\ast})$ and $(X,\mu,S,T)$. We remark that the system $(X^{\ast},\mu_{S,T}, S^{\ast},T^{\ast})$ may not be ergodic even if $(X,\mu,S,T)$ is ergodic.

\subsubsection{The Host seminorm} Let $f\in L^{\infty}(\mu)$. The Host seminorm \cite{H} is defined to be the quantity

$$\normm{f}_{\mu,S,T}=\Bigl(\int_{X^4}  f(x_0)f(x_1)f(x_2)f(x_3)d\mu_{S,T}\Bigr)^{1/4}.$$

We have
\begin{prop}[\cite{H}, Proposition 2] \label{Cauchy}
\quad
\begin{enumerate}
\item For $f_0,f_1,f_2,f_3\in L^{\infty}(\mu)$, we have
 $$\int_{X^4} f_0\otimes f_1 \otimes f_2\otimes f_3 d\mu_{S,T}\leq \normm{f_0}_{\mu,S,T}\normm{f_1}_{\mu,S,T}\normm{f_2}_{\mu,S,T}\normm{f_3}_{\mu,S,T} $$

\item $\normm{\cdot}_{\mu,S,T}$ is a seminorm on $L^{\infty}(\mu)$.

\end{enumerate}
\end{prop}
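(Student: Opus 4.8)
The plan is to prove part (1) by applying the Cauchy--Schwarz inequality twice --- first using the structure of $\mu_{S,T}$ as a relative independent square over $\mathcal{I}_{T\times T}$, then using the structure of $\mu_S$ as a relative independent square over $\mathcal{I}_S$ --- and then to deduce the seminorm property (2) from (1) by the usual expansion trick. As a preliminary, for $g_0,g_1\in L^\infty(\mu)$ set
$$J(g_0,g_1):=\int_{X^4} g_0(x_0)g_1(x_1)g_0(x_2)g_1(x_3)\,d\mu_{S,T}=\int_{X^2}\mathbb{E}(g_0\otimes g_1\mid\mathcal{I}_{T\times T})^2\,d\mu_S,$$
the second equality being the defining property of $\mu_{S,T}$ applied with both halves equal to $g_0\otimes g_1$. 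In particular $J(g_0,g_1)\ge 0$, and $\normm{g}_{\mu,S,T}^4=J(g,g)\ge 0$; this nonnegativity is what makes the $1/4$-power, and the word ``seminorm'', meaningful, so I would record it first.

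For (1), the first Cauchy--Schwarz is carried out in $L^2(\mu_S)$ on the defining identity for the left-hand side:
$$\int_{X^4} f_0\otimes f_1\otimes f_2\otimes f_3\,d\mu_{S,T}=\int_{X^2}\mathbb{E}(f_0\otimes f_1\mid\mathcal{I}_{T\times T})\,\mathbb{E}(f_2\otimes f_3\mid\mathcal{I}_{T\times T})\,d\mu_S\le J(f_0,f_1)^{1/2}J(f_2,f_3)^{1/2}.$$
It is therefore enough to prove the single inequality $J(g_0,g_1)\le\normm{g_0}_{\mu,S,T}^2\,\normm{g_1}_{\mu,S,T}^2$, since substituting it into the line above produces exactly $\prod_{i=0}^{3}\normm{f_i}_{\mu,S,T}$.

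To bound $J(g_0,g_1)$ I would fully unfold $\mu_{S,T}$ into its two defining conditional expectations. Concretely, the mean ergodic theorem (applied first to $S$ on $X$ and then to $T\times T$ on $X^2$) gives
$$\int a\otimes b\otimes c\otimes d\,d\mu_{S,T}=\lim_{M\to\infty}\frac1M\sum_{m=0}^{M-1}\lim_{N\to\infty}\frac1N\sum_{n=0}^{N-1}\int_X a(x)\,b(S^nx)\,c(T^mx)\,d(S^nT^mx)\,d\mu(x),$$
and writing $J(g_0,g_1)$ in this form, using commutativity of $S$ and $T$ to pair the two occurrences of $g_0$ and of $g_1$, replacing the inner average over $n$ by $\mathbb{E}(\,\cdot\mid\mathcal{I}_S)$, and then applying Cauchy--Schwarz in $L^2(\mu)$ for each fixed $m$ followed by Cauchy--Schwarz for the average over $m$, one arrives at
$$J(g_0,g_1)\le\Big(\lim_{M}\tfrac1M\sum_{m=0}^{M-1}\int_X\mathbb{E}\big(g_0\cdot g_0\circ T^m\mid\mathcal{I}_S\big)^2\,d\mu\Big)^{1/2}\Big(\lim_{M}\tfrac1M\sum_{m=0}^{M-1}\int_X\mathbb{E}\big(g_1\cdot g_1\circ T^m\mid\mathcal{I}_S\big)^2\,d\mu\Big)^{1/2}.$$
Finally one recognizes that $\int_X\mathbb{E}(F\mid\mathcal{I}_S)^2\,d\mu=\int_{X^2}F(x_0)F(x_1)\,d\mu_S$ (with $F=g_i\cdot g_i\circ T^m$) and applies the mean ergodic theorem for $T\times T$ on $(X^2,\mu_S)$ to see that each factor under the square roots equals $\normm{g_i}_{\mu,S,T}^4$; this yields $J(g_0,g_1)\le\normm{g_0}_{\mu,S,T}^2\normm{g_1}_{\mu,S,T}^2$, completing (1). (Phrased abstractly, this second Cauchy--Schwarz is made possible by the invariance of $\mu_{S,T}$ under $S\times\id\times S\times\id$ --- which follows from its stated invariances under $\id\times S\times\id\times S$ and under $g\times g\times g\times g$ --- together with the relative-independent-square structure of $\mu_S$.) The delicate point here is the bookkeeping: one must arrange the second Cauchy--Schwarz so as to land back precisely on $\normm{g_i}_{\mu,S,T}^4$, and take the iterated limits in $M$ and $N$ in the correct order; but this is exactly the standard Cauchy--Schwarz--Gowers computation, so I expect it to be the only real work and not to present any essential difficulty.

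For (2): homogeneity $\normm{\lambda f}_{\mu,S,T}=|\lambda|\,\normm{f}_{\mu,S,T}$ is immediate from the $4$-linearity of the defining integral, and nonnegativity was noted above. For subadditivity, expand $\normm{f+g}_{\mu,S,T}^4=\int_{X^4}(f+g)(x_0)(f+g)(x_1)(f+g)(x_2)(f+g)(x_3)\,d\mu_{S,T}$ into the $16$ integrals $\int_{X^4}\varepsilon_0\otimes\varepsilon_1\otimes\varepsilon_2\otimes\varepsilon_3\,d\mu_{S,T}$ with each $\varepsilon_i\in\{f,g\}$, bound each of them by $\prod_{i}\normm{\varepsilon_i}_{\mu,S,T}$ using (1), and sum to get $\normm{f+g}_{\mu,S,T}^4\le\big(\normm{f}_{\mu,S,T}+\normm{g}_{\mu,S,T}\big)^4$; taking fourth roots gives the triangle inequality, so $\normm{\cdot}_{\mu,S,T}$ is a seminorm.
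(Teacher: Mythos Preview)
The paper does not give its own proof of this proposition: it is quoted from Host's paper \cite{H} (Proposition~2 there) and used as a black box. Your argument is correct and is essentially the standard Cauchy--Schwarz--Gowers computation that Host carries out in \cite{H}: one Cauchy--Schwarz using the relative-independent-square structure of $\mu_{S,T}$ over $\mathcal{I}_{T\times T}$, then a second using that of $\mu_S$ over $\mathcal{I}_S$, followed by the usual $16$-term expansion for the triangle inequality.

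One stylistic remark: the detour through the mean ergodic theorem and iterated limits in $M,N$ is not needed. You can bound $J(g_0,g_1)$ directly by writing
\[
J(g_0,g_1)=\int_{X^2}\mathbb{E}(g_0\otimes g_1\mid\mathcal{I}_{T\times T})^2\,d\mu_S
=\int_{X^4}g_0(x_0)g_1(x_1)g_0(x_2)g_1(x_3)\,d\mu_{S,T},
\]
using the invariance of $\mu_{S,T}$ under the coordinate permutation $(x_0,x_1,x_2,x_3)\mapsto(x_0,x_2,x_1,x_3)$ (which follows from the symmetry of $\mu_S$ and of the relative-independent-square construction), and then applying Cauchy--Schwarz a second time exactly as you did the first. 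This avoids any limit bookkeeping and is how the argument is usually presented; but your version is equivalent and perfectly sound.
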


We recall some standard notation.
For any two $\sigma$-algebras $\mathcal{A}$ and $\mathcal{B}$ of $X$, let $\mathcal{A}\vee\mathcal{B}$ denote the $\sigma$-algebra generated by $\{A\cap B\colon A\in \mathcal{A}, B\in \mathcal{B}\}$. If $f$ is a measurable function on $(X,\mathcal{X},\mu)$ and $\mathcal{A}$ is a sub-algebra of $\mathcal{X}$, let $\mathbb{E}(f|\mathcal{A})$ denote the conditional expectation of $f$ over $\mathcal{A}$.

\begin{defn}
 Let $(X,\mu,S,T)$ be a measure preserving system with commuting transformations $S$ and $T$. We say that $(X,\mu,S,T)$ is {\it magic} if
 $$\mathbb{E}(f|\mathcal{I}_S\vee \mathcal{I}_T)=0 \text{ if and only if } \normm{f}_{\mu,S,T}=0.$$

\end{defn}
The connection between the Host measure $\mu_{S,T}$ and magic systems is:
\begin{thm}[\cite{H}, Theorem 2]\label{mig}
The system $(X^{\ast},\mu_{S,T}, {S^{\ast}},{T^{\ast}})$ defined in Section 2.3.1 is a magic extension system of $(X,\mu,S,T)$.
\end{thm}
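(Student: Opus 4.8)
The theorem packages two assertions: that $\pi\colon X^{\ast}\to X$, $(x_{0},x_{1},x_{2},x_{3})\mapsto x_{3}$, is a factor map, and that $(X^{\ast},\mu_{S,T},S^{\ast},T^{\ast})$ is \emph{magic}. The first is essentially recorded already in Section 2.3.1 — $\pi_{\ast}\mu_{S,T}=\mu$ follows from the defining formula of $\mu_{S,T}$ by putting $f_{0}=f_{1}=f_{2}=1$, and $\pi$ intertwines $(S^{\ast})^{i}(T^{\ast})^{j}=\id\times S^{i}\times T^{j}\times S^{i}T^{j}$ with $S^{i}T^{j}$ — so all the content is the magic property: for every $F\in L^{\infty}(\mu_{S,T})$,
\[\mathbb{E}_{\mu_{S,T}}(F\,|\,\mathcal{I}_{S^{\ast}}\vee\mathcal{I}_{T^{\ast}})=0\iff\normm{F}_{\mu_{S,T},S^{\ast},T^{\ast}}=0.\]
The plan is to treat the two implications separately.

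One direction holds in every system. Unwinding the two relative independent squares in the definition of the seminorm gives
\[\normm{F}_{\mu_{S,T},S^{\ast},T^{\ast}}^{4}=\bigl\|\mathbb{E}_{(\mu_{S,T})_{S^{\ast}}}(F\otimes F\,|\,\mathcal{I}_{T^{\ast}\times T^{\ast}})\bigr\|_{L^{2}((\mu_{S,T})_{S^{\ast}})}^{2},\]
so $\normm{F}_{\mu_{S,T},S^{\ast},T^{\ast}}=0$ forces $\mathbb{E}_{(\mu_{S,T})_{S^{\ast}}}(F\otimes F\,|\,\mathcal{I}_{T^{\ast}\times T^{\ast}})=0$. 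Testing this against $a\otimes a$ for $a$ an $\mathcal{I}_{T^{\ast}}$-measurable function, and using that $(\mu_{S,T})_{S^{\ast}}$ is the relative independent square over $\mathcal{I}_{S^{\ast}}$, yields $\|\mathbb{E}_{\mu_{S,T}}(Fa\,|\,\mathcal{I}_{S^{\ast}})\|_{2}^{2}=0$; hence $\int F\cdot(ac)\,d\mu_{S,T}=0$ for every $\mathcal{I}_{T^{\ast}}$-measurable $a$ and $\mathcal{I}_{S^{\ast}}$-measurable $c$, and since such products generate $\mathcal{I}_{S^{\ast}}\vee\mathcal{I}_{T^{\ast}}$ this gives $\mathbb{E}_{\mu_{S,T}}(F\,|\,\mathcal{I}_{S^{\ast}}\vee\mathcal{I}_{T^{\ast}})=0$. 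So the real work is the converse.

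For the converse, $\mathbb{E}_{\mu_{S,T}}(F\,|\,\mathcal{I}_{S^{\ast}}\vee\mathcal{I}_{T^{\ast}})=0\Rightarrow\normm{F}=0$, the plan is to compute the measure $(\mu_{S,T})_{S^{\ast}}$ on $(X^{\ast})^{2}=X^{8}$ explicitly from the recursive definition of $\mu_{S,T}$, exploiting that $S^{\ast}=\id\times S\times\id\times S$ is the identity on the coordinates $x_{0},x_{2}$. Writing the two copies as $(x_{0},x_{1},x_{2},x_{3})$ and $(x_{0}',x_{1}',x_{2}',x_{3}')$: because $\sigma(x_{0})$ and $\sigma(x_{2})$ lie in $\mathcal{I}_{S^{\ast}}$, the relative independent square over $\mathcal{I}_{S^{\ast}}$ is carried by $\{x_{0}=x_{0}',\,x_{2}=x_{2}'\}$, and one then has to pin down the rest of $\mathcal{I}_{S^{\ast}}$ modulo $\mu_{S,T}$ (the part involving the $S\times S$-action on $x_{1},x_{3}$) and, with $(\mu_{S,T})_{S^{\ast}}$ in hand, the $\sigma$-algebra $\mathcal{I}_{T^{\ast}\times T^{\ast}}$ with respect to it — here $T^{\ast}=\id\times\id\times T\times T$ fixes $x_{0},x_{1}$, so a parallel reduction applies. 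After this, $\mathbb{E}_{(\mu_{S,T})_{S^{\ast}}}(F\otimes F\,|\,\mathcal{I}_{T^{\ast}\times T^{\ast}})$ can be written out in terms of conditional expectations over $\mathcal{I}_{S}$ and $\mathcal{I}_{T}$ on $X$, and the relative-independent-square construction is arranged so that the vanishing of this expression is equivalent to $\mathbb{E}_{\mu_{S,T}}(F\,|\,\mathcal{I}_{S^{\ast}}\vee\mathcal{I}_{T^{\ast}})=0$, which closes the argument. I expect the main obstacle to be exactly this identification of invariant $\sigma$-algebras: showing that no unwanted invariant sets appear in $\mathcal{I}_{S^{\ast}}$, in $\mathcal{I}_{T^{\ast}}$, or in $\mathcal{I}_{T^{\ast}\times T^{\ast}}$ for $(\mu_{S,T})_{S^{\ast}}$ beyond the coordinate-wise pieces and the $\mathcal{I}_{S}$-, $\mathcal{I}_{T}$-pullbacks — this is the point where the double relative-independent-square construction earns its name, and verifying it amounts to carefully chaining the conditional-independence relations through the successive measures $\mu$, $\mu_{S}$, $\mu_{S,T}$, $(\mu_{S,T})_{S^{\ast}}$ and the Host measure of $(X^{\ast},\mu_{S,T},S^{\ast},T^{\ast})$ on $X^{16}$. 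An alternative, closer to Host's original route, is to establish an idempotency of the Host measure — that this last $X^{16}$-measure is assembled out of copies of $\mu_{S,T}$ — from which magicness of the extension is formal; the difficulty is the same, merely repackaged.
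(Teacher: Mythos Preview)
The paper does not prove this statement: Theorem \ref{mig} is quoted with attribution to \cite[Theorem 2]{H} and no argument is supplied. There is therefore no in-paper proof to compare your proposal against.

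On the substance of your sketch: the easy direction $\normm{F}=0\Rightarrow\mathbb{E}(F\mid\mathcal{I}_{S^{\ast}}\vee\mathcal{I}_{T^{\ast}})=0$ is handled correctly and indeed holds in any system. For the converse you have accurately located the crux --- one must identify $\mathcal{I}_{S^{\ast}}$ and $\mathcal{I}_{T^{\ast}}$ modulo $\mu_{S,T}$, and then $\mathcal{I}_{T^{\ast}\times T^{\ast}}$ modulo $(\mu_{S,T})_{S^{\ast}}$ --- but you have not carried this out; your last paragraph describes what must be verified rather than verifying it. What you call the ``idempotency'' route is in fact how Host proceeds: one shows that the Host measure of $(X^{\ast},\mu_{S,T},S^{\ast},T^{\ast})$ on $X^{16}$ is built from copies of $\mu_{S,T}$ itself (via a symmetry/permutation identity for the iterated relative squares), whence the seminorm $\normm{F}_{\mu_{S,T},S^{\ast},T^{\ast}}$ is expressed by an integral against $\mu_{S,T}$, and the magic property drops out. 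Your proposal is a correct map of the terrain but, as written, is a plan rather than a proof.
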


\subsection{Dynamical cubes}

The following notion of dynamical cubes for a system with commuting transformations was introduced and studied in \cite{DS}:
\begin{defn}
  Let $(X,S,T)$ be a topological dynamical system with commuting transformations $S$ and $T$. We let $\mathcal{G}_{S,T}$ denote the subgroup of $G^{4}$ generated by $\id\times S\times \id \times S$, $\id\times\id\times T \times T$ and $g\times g\times g\times g, g\in G$. For any $R\in G$, let $\mathcal{G}_{R}$ denote the subgroup of $G^{2}$ generated by $\id\times R$ and $g\times g, g\in G$.
\end{defn}

\begin{defn}
Let $(X,S,T)$ be a topological dynamical system with commuting transformations $S$ and $T$ and let $R\in G$. We define
\begin{equation}\nonumber
  \begin{split}
  &\bold{Q}_{S,T}(X)=\overline{\{(x,S^ix,T^jx,S^iT^jx)\colon x\in X, i,j\in \Z\}};
  \\&\bold{Q}_{R}(X)=\overline{\{(x,R^{i}x)\in X\colon x\in X, i\in\mathbb{Z}\}}.
  \end{split}
\end{equation}
\end{defn}

\section{The existence of free magic extensions}
In this section,
we strengthen Theorem \ref{mig} for our purposes by requiring the magic extension to be also ergodic and free.
We remark that there are a lot of interesting systems with commuting transformations where the action is not free. For example, the system $(X,\mu,S,S^i)$, where $S$ is an ergodic measure preserving transformation of $X$ and $i\in \Z, i\neq 1$.
However, we have

\begin{thm} \label{MagicFree}
Let $(X,\mu,S,T)$ be an ergodic system with commuting transformations $S$ and $T$. Suppose that $S^i$ and $T^j$ are not the identity for any $i,j\in \Z\setminus \{0\}$. Then there exists a magic extension $(\widehat{X},\nu, {S^{\ast
}},{T^{\ast}})$ where the action of $\Z^2$ is free and ergodic.
\end{thm}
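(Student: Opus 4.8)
The plan is to start from the Host magic extension $(X^{\ast},\mu_{S,T},S^{\ast},T^{\ast})$ of Theorem \ref{mig} and then repair two defects: it need not be ergodic, and the $\Z^2$-action generated by $S^{\ast},T^{\ast}$ need not be free. First I would address ergodicity by passing to an ergodic component. Since $S^{\ast}$ and $T^{\ast}$ commute, the measure $\mu_{S,T}$ decomposes into ergodic components for the $\Z^2$-action, and almost every ergodic component $\nu$ still projects onto $(X,\mu,S,T)$ under $\pi$ (because the pushforward of $\mu_{S,T}$ is $\mu$, which is ergodic, so a.e. ergodic component pushes forward to $\mu$). The key point is that the magic property is inherited by a.e. ergodic component: being magic is equivalent to the equality of two $\sigma$-algebras up to null sets, namely that $\mathcal{I}_{S^{\ast}}\vee\mathcal{I}_{T^{\ast}}$ and the $\sigma$-algebra of sets with vanishing Host seminorm agree; one checks that both the Host measure construction $\mu_{S,T}\mapsto(\mu_{S,T})_{S^{\ast},T^{\ast}}$ on $(X^{\ast})^4$ and the relevant conditional expectations localize correctly to ergodic components, so the defining equivalence survives disintegration. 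This gives an ergodic magic extension $(\widehat X,\nu,S^{\ast},T^{\ast})$ of $(X,\mu,S,T)$.

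Next I would make the action free. The obstruction is that for some $(i,j)\neq(0,0)$ the transformation $(S^{\ast})^i(T^{\ast})^j$ might equal the identity on $(\widehat X,\nu)$. Note, however, that the hypothesis on $X$ rules out the ``horizontal'' and ``vertical'' degeneracies: $(S^{\ast})^i=\id$ would force $S^i=\id$ on the factor $X$ (the third or last coordinate under $\pi$ is moved by $S^i$), contradicting the assumption, and similarly for $(T^{\ast})^j$. So the only possible degeneracy is of genuinely ``diagonal'' type with both $i\neq 0$ and $j\neq 0$. To kill these, the standard device is to take a product with an auxiliary free system: let $(Z,\rho,a,b)$ be a free ergodic $\Z^2$-system chosen so that its $\Z^2$-action has no nontrivial isotropy — for instance an irrational translation system on $\mathbb{T}^2$, or $(\Z^2,\text{Haar})$-style odometer — and consider $(\widehat X\times Z,\nu\times\rho,S^{\ast}\times a,T^{\ast}\times b)$. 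One must then check (i) this product is still a magic extension of $X$ — magic-ness is preserved under taking products with a system all of whose relevant seminorms behave trivially, which here follows because $Z$ can be chosen with $\mathcal{I}_{a}$ and $\mathcal{I}_{a\times a}$ trivial so that the Host measure of the product factors as a product and the defining equivalence for ``magic'' is inherited from the $\widehat X$-coordinate — and (ii) the product action is now free, since $(S^{\ast}\times a)^i(T^{\ast}\times b)^j=\id$ would require $a^ib^j=\id$ on $Z$, impossible for $(i,j)\neq(0,0)$ by the choice of $Z$; and (iii) the product is still ergodic, for which we choose $Z$ so that its action is ergodic along every subgroup of $\Z^2$ of finite index (a ``totally ergodic'' $\Z^2$-system), which by the standard relative-ergodicity argument makes $\nu\times\rho$ ergodic.

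The main obstacle I expect is point (i) of the last paragraph — verifying that the product with the auxiliary system remains magic. The subtlety is that the Host seminorm $\normm{\cdot}_{\mu\times\rho,S^{\ast}\times a,T^{\ast}\times b}$ is built from relative independent products over $\mathcal{I}_{S^{\ast}\times a}$ and over $\mathcal{I}_{(T^{\ast}\times b)\times(T^{\ast}\times b)}$, and one needs these invariant $\sigma$-algebras to split as $\mathcal{I}_{S^{\ast}}\otimes\mathcal{I}_a$ etc., which is exactly where total ergodicity of $Z$ (forcing $\mathcal{I}_a$, $\mathcal{I}_b$, $\mathcal{I}_{a\times a}$, $\mathcal{I}_{b\times b}$ to be trivial) is used; granting that splitting, the product Host measure is $\nu_{S^{\ast},T^{\ast}}\times(\text{the corresponding measure on }Z^4)$ and one reads off that $\normm{f\otimes 1}_{\mu\times\rho}=\normm{f}_{\mu}$ and that $\mathbb{E}(\cdot\mid \mathcal{I}_{S^{\ast}\times a}\vee\mathcal{I}_{T^{\ast}\times b})$ restricts correctly, giving the magic equivalence. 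An alternative, possibly cleaner, route is to avoid products entirely: observe that after disintegration the only bad directions $(i,j)$ have $ij\neq 0$, and these can only occur in a ``degenerate'' range controlled by the structure of $\mathcal{I}_{S}$ and $\mathcal{I}_T$ on $X$; but since the product construction is robust and self-contained, I would present that argument and relegate the disintegration bookkeeping to routine measure theory.
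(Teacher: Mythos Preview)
Your first step --- passing to an ergodic component of $(X^{\ast},\mu_{S,T},S^{\ast},T^{\ast})$ and invoking that a.e.\ component is still a magic extension of $(X,\mu,S,T)$ --- matches the paper exactly (the paper cites Chu for this). The divergence, and the gap, is in how you handle freeness.

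You argue via the factor map $\pi$ onto the last coordinate: if $(S^{\ast})^i(T^{\ast})^j=\id$ on the ergodic component then $S^iT^j=\id$ on $X$, which rules out $(i,0)$ and $(0,j)$ but not the diagonal $(i,j)$ with $ij\neq0$. You then propose to kill the diagonal degeneracies by tensoring with an auxiliary free $\Z^2$-system. This detour is both unnecessary and genuinely problematic. The step you flag as the main obstacle --- that the product $\widehat X\times Z$ stays magic --- is not established by your sketch: the claimed splitting $\mathcal I_{S^{\ast}\times a}=\mathcal I_{S^{\ast}}\otimes\mathcal I_a$ is false in general even when $\mathcal I_a$ is trivial (think of common eigenfunctions producing nontrivial invariants of the product), and your requirement that $\mathcal I_{a\times a}$ be trivial forces $a$ to be weakly mixing, which already excludes the irrational-rotation examples you suggest. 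So the product route, as written, does not close.

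The idea you are missing is that freeness can be read off \emph{before} disintegrating, using the second and third coordinates of $X^{\ast}=X^4$ rather than the fourth. Since $S^{\ast}=\id\times S\times\id\times S$ and $T^{\ast}=\id\times\id\times T\times T$, the fixed set of $(S^{\ast})^i(T^{\ast})^j$ is contained in $X\times\{S^ix=x\}\times X\times X$ and also in $X\times X\times\{T^jx=x\}\times X$. All one-dimensional marginals of $\mu_{S,T}$ equal $\mu$, so whenever $(i,j)\neq(0,0)$ at least one of these sets has $\mu_{S,T}$-measure zero. Hence the full non-free set has $\mu_{S,T}$-measure zero, and therefore $\mu_{S,T}$-a.e.\ ergodic component is already free. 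Combined with the fact that a.e.\ component is magic and projects to $(X,\mu,S,T)$, a single good component finishes the proof --- no auxiliary product is needed.
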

\begin{rem} By Theorem \ref{mig}, $(X^{\ast},\mu_{S,T},{S^{\ast}
},{T^{\ast}})$ is a magic extension of $X$, but
  since $(X^{\ast},\mu_{S,T},{S^{\ast}
},{T^{\ast}})$ may not be ergodic, we need to decompose the measure $\mu_{S,T}$ in order to get an ergodic magic extension of $X$.
\end{rem}
\begin{proof}
Consider the measure $\mu_{S,T}$ on $X^{\ast}=X^4$. We claim that $\mu_{S,T}(\{\vec{x}: {S^{\ast}}^i{T^{\ast}}^j \vec{x}\neq \vec{x} \})=1$ for every $i,j\in \Z$. Let ${A^{\ast}}_{i,j}$ denote the set $\{\vec{x}: {S^{\ast}}^i{T^{\ast}}^j \vec{x}\neq \vec{x} \}$. Then the complement of ${A^{\ast}}_{i,j}$ is included in the union of the sets $X\times A\times X\times X$ and $X\times X\times B\times X$, where $A=\{x : S^i x=x\}$ and $B=\{x: T^jx=x\}$. Since the projection of $\mu_{S,T}$ onto any coordinate equals $\mu$, we have that $\mu_{S,T}({A^{\ast}}_{i,j}^c)\leq \mu(A)+\mu(B)=0$. Therefore, writing ${A^{\ast}}=\bigcap_{i,j\in \Z} {A^{\ast}}_{i,j}$, we have that $\mu_{S,T}({A^{\ast}})=1$.

Let $$\mu_{S,T}=\int \mu_{S,T,\vec{x}}d\mu_{S,T}(\vec{x})$$ be the ergodic decompositions of $\mu_{S,T}$ under ${S^{\ast}}$ and ${T^{\ast}}$.
Then we have that $\mu_{S,T,\vec{x}}({A^{\ast}})=1$ for $\mu_{S,T}$-a.e. $\vec{x}\in \widehat{X}$. By Proposition 3.13 of \cite{Chu}, for $\mu_{S,T}$-almost every $\vec{x}\in \widehat{X}$, the system $(X^{\ast},\mu_{S,T,\vec{x}}, {S^{\ast}},{T^{\ast}})$ is a magic extension of $(X,\mu,S,T)$. Hence, we can pick $\vec{x}_0\in A^{\ast}$ such that $(X^{\ast},\mu_{S,T,\vec{x}_0}, {S^{\ast}},{T^{\ast}})$ is a magic extension. This is a magic ergodic free extension of $(X,\mu,S,T)$.
\end{proof}




We prove some properties for later use.
In the rest of this section, we assume that $(X,\mu,S,T)$ is a free magic ergodic measure preserving system. Let $\W$ denote the $\sigma$-algebra $\mathcal{I}_S\vee \mathcal{I}_T$ and let $\mathcal{Z}_{S,T}$ be the factor associated to this $\sigma$-algebra.

\begin{lem} \label{ProductFactor}
 $\mathcal{Z}_{S,T}$ is isomorphic to the product of two ergodic systems. 
\end{lem}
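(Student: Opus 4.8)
The plan is to show that the factor $\mathcal{Z}_{S,T}$ associated with $\mathcal{W}=\mathcal{I}_S\vee\mathcal{I}_T$ splits as a product of an $\mathcal{I}_S$-measurable system and an $\mathcal{I}_T$-measurable system. The natural candidates for the two factors are $\mathcal{Z}_S$, the factor associated with $\mathcal{I}_S$, and $\mathcal{Z}_T$, the factor associated with $\mathcal{I}_T$. So what must be proved is that the $\sigma$-algebras $\mathcal{I}_S$ and $\mathcal{I}_T$ are \emph{relatively independent} over the trivial $\sigma$-algebra inside $\mathcal{W}$, i.e. that for all bounded $\mathcal{I}_S$-measurable $f$ and $\mathcal{I}_T$-measurable $g$ one has $\int fg\,d\mu=\int f\,d\mu\int g\,d\mu$; equivalently, $\mathbb{E}(f\mid\mathcal{I}_T)=\int f\,d\mu$ whenever $f$ is $\mathcal{I}_S$-measurable. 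Once this independence is established, the map $x\mapsto(\mathcal{Z}_S(x),\mathcal{Z}_T(x))$ realizes $\mathcal{Z}_{S,T}$ as the product system $\mathcal{Z}_S\times\mathcal{Z}_T$ with product measure, and each factor is ergodic because it is a factor of the ergodic system $X$.

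The key step, then, is the independence of $\mathcal{I}_S$ and $\mathcal{I}_T$, and this is exactly where the \emph{magic} hypothesis should enter. First I would reduce to showing $\normm{f}_{\mu,S,T}=0$ for a suitable $f$: if $f$ is $\mathcal{I}_S$-measurable with $\int f\,d\mu=0$, I want to conclude $f=0$ in $L^2$, and by the magic property it suffices to show $\normm{f}_{\mu,S,T}=0$. To compute this seminorm, unwind the definition of $\mu_{S,T}$: since $\mu_S$ is the relative independent square of $\mu$ over $\mathcal{I}_S$, and $f$ is $\mathcal{I}_S$-measurable, the two copies $f(x_0)$ and $f(x_1)$ can be replaced by the single function $f$ on the diagonal data, so $\int_{X^4}f(x_0)f(x_1)f(x_2)f(x_3)\,d\mu_{S,T}$ collapses via the $\mathcal{I}_{T\times T}$ construction to an expression of the form $\int_{X}\bigl(\mathbb{E}(F\mid\mathcal{I}_T)\bigr)^2\,d\mu$ where $F$ is built from $f$ and involves $\mathbb{E}(f^2\mid\mathcal{I}_S)=$ (up to the ergodic theorem bookkeeping) a function still controlled by $f$. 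The point is that an $\mathcal{I}_S$-measurable function is already ``invariant in the $S$-directions'' of $X^\ast$, so the only surviving averaging is the $\mathcal{I}_{T\times T}$ one, and $\normm{f}_{\mu,S,T}^4$ reduces to $\|\mathbb{E}(f\mid\mathcal{I}_T)\|$-type quantities; symmetrically, the $\mathcal{I}_T$-invariance would make it reduce to $\mathbb{E}(f\mid\mathcal{I}_S)=f$ terms. Balancing these two reductions forces $\normm{f}_{\mu,S,T}^4$ to equal something like $\bigl(\int f\,d\mu\bigr)^{\text{power}}$, hence $0$.

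Alternatively, and perhaps more cleanly, I would argue directly with the definition of $\mu_{S,T}$ and Proposition \ref{Cauchy}(1): apply the Cauchy–Schwarz–Gowers inequality to $f_0=f_2=f$ ($\mathcal{I}_S$-measurable) and $f_1=f_3=g$ ($\mathcal{I}_T$-measurable), and compute the left-hand side $\int_{X^4}f\otimes g\otimes f\otimes g\,d\mu_{S,T}$ explicitly: because $g$ is $\mathcal{I}_T$-measurable, $\mathbb{E}(f\otimes g\mid\mathcal{I}_{T\times T})=g(x_1)\,\mathbb{E}(f\mid\mathcal{I}_T)(x_0)$ collapses nicely, and because $f$ is $\mathcal{I}_S$-measurable the outer $\mu_S$-integral collapses to $\int_X fg\,d\mu$ times correction terms; tracking this should give $\int fg\,d\mu\le \normm{f}\,\normm{g}\cdot(\text{something})$ and a matching lower bound, pinning down the covariance. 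Then ergodicity of $X$ plus a standard density/approximation argument upgrades the scalar covariance identity to the full relative independence, and the product structure follows.

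The main obstacle I anticipate is the bookkeeping in the collapse of $\mu_{S,T}$ when one of the functions is $\mathcal{I}_S$- or $\mathcal{I}_T$-measurable: one has to be careful that $\mathbb{E}(f\mid\mathcal{I}_S)=f$ interacts correctly with the relative product over $\mathcal{I}_{T\times T}$, since $\mathcal{I}_S$ and $\mathcal{I}_T$ need not be independent a priori --- that is precisely what is being proved, so the argument must be arranged to avoid circularity (this is why routing through $\normm{\cdot}_{\mu,S,T}$ and the magic hypothesis, rather than through a naive Fubini, is essential). A secondary point to check is that $\mathcal{Z}_S$ and $\mathcal{Z}_T$ together generate $\mathcal{W}=\mathcal{I}_S\vee\mathcal{I}_T$ as claimed, which is immediate from the definition of $\vee$, and that the joining being trivial over the point really does yield the \emph{product} system and not merely a relatively independent joining over some intermediate factor --- this uses ergodicity of the ambient system to rule out a nontrivial common factor of $\mathcal{Z}_S$ and $\mathcal{Z}_T$.
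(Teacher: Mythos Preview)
Your reduction to showing that $\mathcal{I}_S$ and $\mathcal{I}_T$ are independent is exactly right, but the plan after that goes wrong. You write that for $f$ which is $\mathcal{I}_S$-measurable with $\int f\,d\mu=0$ you ``want to conclude $f=0$ in $L^2$''. That conclusion is false in general: it would force $\mathcal{I}_S$ to be trivial, i.e.\ $S$ alone to be ergodic, which is not assumed (only the joint $G$-action is ergodic). What you actually need is the weaker statement $\mathbb{E}(f\mid\mathcal{I}_T)=0$. Correspondingly, the route through $\normm{f}_{\mu,S,T}=0$ cannot succeed: in a magic system $\normm{f}_{\mu,S,T}=0$ is \emph{equivalent} to $\mathbb{E}(f\mid\mathcal{W})=0$, and for a nonzero $\mathcal{I}_S$-measurable $f$ one has $\mathbb{E}(f\mid\mathcal{W})=f\neq 0$, so its seminorm is nonzero. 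The ``alternative'' plan via Proposition~\ref{Cauchy} inherits the same circularity you flag at the end and never produces the needed identity.

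More importantly, the magic hypothesis is a red herring here: the lemma holds for \emph{any} ergodic system $(X,\mu,S,T)$ with commuting $S,T$, and the paper's argument uses only ergodicity. Take $A\in\mathcal{I}_T$ and $B\in\mathcal{I}_S$. Since $A$ is $T$-invariant and $B$ is $S$-invariant, $1_{A\cap B}(S^iT^jx)=1_A(S^ix)\,1_B(T^jx)$, so
\[
\frac{1}{N^2}\sum_{i,j=0}^{N-1}1_{A\cap B}(S^iT^jx)
=\Bigl(\frac{1}{N}\sum_{i=0}^{N-1}1_A(S^ix)\Bigr)\Bigl(\frac{1}{N}\sum_{j=0}^{N-1}1_B(T^jx)\Bigr).
\]
By ergodicity of the $G$-action the left side converges in $L^2$ to $\mu(A\cap B)$; the same ergodicity (after collapsing the redundant average, since $A$ is $T$-invariant and $B$ is $S$-invariant) makes each factor on the right converge to $\mu(A)$ and $\mu(B)$ respectively. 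Hence $\mu(A\cap B)=\mu(A)\mu(B)$, which is exactly the independence you were after. No seminorms needed.
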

\begin{proof}
Let $A\in \mathcal{I}_T$ and $B\in \mathcal{I}_S$. We have that $\lim_{N\to\infty}\frac{1}{N^2}\sum_{i,j=0}^{N-1} 1_{A}\circ S^i \circ T^j$ converges in $L^2(\mu)$ to $\mu(A)$. Since $A$ is invariant under $T$, we have that $\lim_{N\to\infty}\frac{1}{N} \sum_{j=0}^{N-1}1_{A}\circ S^j$ converges to $\mu(A)$. Similarly $\lim_{N\to\infty}\frac{1}{N} \sum_{j=0}^{N-1}1_{B}\circ S^j$ converges to $\mu(B)$. It follows that $$\lim_{N\to\infty}\frac{1}{N^2}\sum_{i,j=0}^{N-1} 1_{A\cap B}\circ S^i \circ T^j=\lim_{N\to\infty}\frac{1}{N^2}\sum_{i,j=0}^{N-1} 1_{A}1_{B}\circ S^i \circ T^j =\mu(A)\mu(B).  $$  Since $(X,\mu,S,T)$ is ergodic, this limit equals $\mu(A\cap B)$ and therefore $\mu(A\cap B)=\mu(A)\mu(B)$.

We conclude that the map $A\cap B\to A\times B$ defines a measure preserving isomorphism between $(X,\mathcal{I}_T\vee \mathcal{I}_S,\mu,S,T)$ and $(X\times X, \mathcal{I}_T\otimes \mathcal{I}_S,\mu\otimes \mu, S\times \id, \id \times T)$.

\end{proof}

For convenience, we write $(\mathcal{Z}_{S,T},S,T)=(Y\times W, \sigma\times \id,\id \times \tau)$.

\begin{lem} \label{Measurable}
 The $\sigma$-algebra of $(T\times T)$-invariant sets on $(X^2,\mu_S)$ is measurable with respect to $\W^2$.
\end{lem}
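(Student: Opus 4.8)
The plan is to show that the $\sigma$-algebra $\mathcal{I}_{T\times T}$ of $(T\times T)$-invariant sets on $(X^2,\mu_S)$ coincides (mod $\mu_S$-null sets) with the restriction of the product $\sigma$-algebra $\W^2 = (\mathcal{I}_S\vee\mathcal{I}_T)\otimes(\mathcal{I}_S\vee\mathcal{I}_T)$. The inclusion we actually need is that every $(T\times T)$-invariant function is $\W^2$-measurable, so I will work with functions $F\in L^2(\mu_S)$ with $F\circ(T\times T)=F$ and try to show $\E(F\,|\,\W^2)=F$.

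First I would unwind the structure of $\mu_S$: it is the relative independent square of $\mu$ over $\mathcal{I}_S$, so $(X^2,\mu_S,\id\times S)$ has the two coordinates conditionally independent over the common factor $\mathcal{I}_S$ (viewed diagonally), and $S\times S$, $T\times T$ act on it. A natural approach is to note that $L^2(\mu_S)$ is spanned by functions of the form $f_0\otimes f_1$, and to compute $\E(f_0\otimes f_1\,|\,\mathcal{I}_{T\times T})$ using an ergodic average: by the mean ergodic theorem for $T\times T$ acting on $(X^2,\mu_S)$,
\[
\E(f_0\otimes f_1\,|\,\mathcal{I}_{T\times T}) = \lim_{N\to\infty}\frac{1}{N}\sum_{j=0}^{N-1} (f_0\circ T^j)\otimes(f_1\circ T^j)
\]
in $L^2(\mu_S)$. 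The point is then to understand this limit. Splitting each $f_k = \E(f_k\,|\,\mathcal{I}_T) + (f_k - \E(f_k\,|\,\mathcal{I}_T))$ and using that $\mu_S$ is an extension of $\mu$ on each coordinate, the cross terms involving a $T$-mean-zero factor should vanish in the average — this is where I expect to need the specific relative-independence structure of $\mu_S$ over $\mathcal{I}_S$, together with a van der Corput / spectral argument on the second coordinate. The surviving contribution should be expressible through $\E(f_0\,|\,\mathcal{I}_T)$, $\E(f_1\,|\,\mathcal{I}_T)$ and the $S$-dynamics, which is governed by $\mathcal{I}_S$ on each coordinate via the definition of $\mu_S$; combining, the limit becomes measurable with respect to $(\mathcal{I}_S\vee\mathcal{I}_T)\otimes(\mathcal{I}_S\vee\mathcal{I}_T)=\W^2$. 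Since such functions $f_0\otimes f_1$ are dense, $\mathcal{I}_{T\times T}\subseteq \W^2$ mod null sets, which gives the lemma.

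An alternative, perhaps cleaner route uses Lemma \ref{ProductFactor}: on the level of the factor $\mathcal{Z}_{S,T} = Y\times W$ with $S=\sigma\times\id$, $T=\id\times\tau$, the relevant joinings become transparent because $T$ acts only on the second factor. One can try to reduce the computation of $\mathcal{I}_{T\times T}$ on $(X^2,\mu_S)$ to a computation on the factor, by showing that passing to $\mathcal{Z}_{S,T}$ loses no $(T\times T)$-invariant information; this uses that $\W$ is exactly the Kronecker-type factor capturing $\mathcal{I}_S$ and $\mathcal{I}_T$, and that $\mu_S$ restricted to $\W^2$ is determined by the factor. Either way, the main obstacle is the same: controlling the ergodic average above and proving that the $T$-mean-zero parts of the coordinate functions do not survive the $(T\times T)$-average against the measure $\mu_S$ — i.e., establishing that $(T\times T)$-invariance on $(X^2,\mu_S)$ forces each coordinate to be $(\mathcal{I}_S\vee\mathcal{I}_T)$-measurable. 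This is genuinely a statement about the interaction of the two relative independent squares and is where the free/magic/ergodic hypotheses and the product structure of Lemma \ref{ProductFactor} should be brought to bear.
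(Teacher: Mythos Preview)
There is a genuine gap: your decomposition of $f_k$ relative to $\mathcal{I}_T$ is the wrong splitting, and the claimed vanishing of the cross terms is false even in free ergodic magic systems. Take $X=\mathbb{T}^2$ with Lebesgue measure, $S(x,y)=(x+\alpha,y)$, $T(x,y)=(x,y+\beta)$ for irrational $\alpha,\beta$; this is a free ergodic magic system with $\mathcal{I}_T=\sigma(x)$ and $\mathcal{I}_S=\sigma(y)$. Since $\mu_S$ is the relative square over $\mathcal{I}_S$, it is supported on $\{((x_0,y),(x_1,y))\}$ with $x_0,x_1,y$ independent uniform. With $f_0(x,y)=e(y)$ and $f_1(x,y)=e(-y)$ one has $\E(f_0\,|\,\mathcal{I}_T)=0$, yet $f_0\otimes f_1\equiv 1$ $\mu_S$-a.e., so $\E(f_0\otimes f_1\,|\,\mathcal{I}_{T\times T})=1\neq 0$. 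No van der Corput or spectral argument will rescue this, because the obstruction comes from $S$-invariant eigenfunctions of $T$, which lie in $\W$ but not in $\mathcal{I}_T$.

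The correct decomposition is over $\W$, and proving that the cross terms vanish when $\E(f_i\,|\,\W)=0$ is exactly where the magic hypothesis and the Host seminorm enter; neither appears concretely in your sketch. The paper's argument is short and uses these tools directly: by the very definition of $\mu_{S,T}$ as the relative square of $\mu_S$ over $\mathcal{I}_{T\times T}$,
\[
\bigl\|\E(f_0\otimes f_1\,|\,\mathcal{I}_{T\times T})\bigr\|_{L^2(\mu_S)}^2=\int f_0\otimes f_1\otimes f_0\otimes f_1\,d\mu_{S,T},
\]
and the Cauchy--Schwarz--Gowers inequality (Proposition~\ref{Cauchy}) bounds the right-hand side by $\normm{f_0}_{\mu,S,T}^{2}\normm{f_1}_{\mu,S,T}^{2}$. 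The magic hypothesis then gives $\normm{f_i}_{\mu,S,T}=0$ whenever $\E(f_i\,|\,\W)=0$, so $\E(f_0\otimes f_1\,|\,\mathcal{I}_{T\times T})=0$ in that case, which is the reduction needed. No ergodic-average computation and no passage to the product factor $\mathcal{Z}_{S,T}$ is required.
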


\begin{proof}
 We follow the proof of Proposition 4.7 of \cite{HK}.
 It suffices to show that $$\mathbb{E}(f_0\otimes f_1|\mathcal{I}(T\times T))=\mathbb{E}(\mathbb{E}(f_0|\W)\otimes\mathbb{E}(f_1|\W)|\mathcal{I}(T\times T)).$$

 It suffices to to prove this equality when $\mathbb{E}(f_i|\W)=0$ for $i=0$ or 1. By Proposition \ref{Cauchy}, we have that

 $$ \int f_0\otimes f_1\otimes f_0\otimes f_1 d\mu_{S,T}= \int |\mathbb{E}(f_0\otimes f_1|\mathcal{I}(T\times T)|^2d\mu_S \leq \normm{f_0}\normm{f_1}\normm{f_0}\normm{f_1},$$
 which implies that $\mathbb{E}(f_0\otimes f_1|\mathcal{I}(T\times T))=0$ whenever $\normm{f_i}=0$ for $i=0$ or 1. Since the system is magic, this is equivalent to $\mathbb{E}(f_i|\W)=0$ for $i=0$ or 1, and we are done.

\end{proof}


\section{Strict ergodicity for dynamical cubes} This section is devoted to the proof of Theorem \ref{thm2}. 
By Theorem \ref{MagicFree}, it suffices to prove the following theorem:

\begin{thm} \label{QStrictlyErgodic}
For any free ergodic magic system $(X,\mu,S,T)$ with two commuting transformations $S$ and $T$, there exists a topological model $(\wh{X},\wh{S},\wh{T})$ of $X$ such that $(\Q_{S,T}(\widehat{X}),\mathcal{G}_{S,T})$ is strictly ergodic.
\end{thm}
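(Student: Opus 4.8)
The plan is to build the model in two stages, following the Huang--Shao--Ye philosophy but adapted to the product structure of the magic system. First I would use the factor structure coming from $\mathcal{W} = \mathcal{I}_S \vee \mathcal{I}_T$. By Lemma \ref{ProductFactor} the associated factor $\mathcal{Z}_{S,T}$ is a product $(Y\times W,\sigma\times\id,\id\times\tau)$ of two ergodic systems. I would first produce strictly ergodic topological models $(\widehat{Y},\widehat{\sigma})$ and $(\widehat{W},\widehat{\tau})$ of $(Y,\sigma)$ and $(W,\tau)$ via the classical Jewett--Krieger theorem; then $(\widehat{Y}\times\widehat{W},\widehat{\sigma}\times\id,\id\times\widehat{\tau})$ is a strictly ergodic model of $\mathcal{Z}_{S,T}$ for the $\mathbb{Z}^2$-action (unique ergodicity of a product of uniquely ergodic $\mathbb{Z}$-systems acting on separate coordinates is standard). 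Crucially, on this model the cube space $\Q_{S,T}(\widehat{Y}\times\widehat{W})$ should itself be strictly ergodic: because $S$ acts trivially on $W$ and $T$ trivially on $Y$, a point of the cube space over $\mathcal{Z}_{S,T}$ has the form $\bigl((y,w),(\sigma^i y,w),(y,\tau^j w),(\sigma^i y,\tau^j w)\bigr)$, so $\Q_{S,T}(\widehat{Y}\times\widehat{W})$ is topologically a fibered product of $\bold{Q}_{\widehat\sigma}(\widehat Y)$ and $\bold{Q}_{\widehat\tau}(\widehat W)$, and the generating group $\mathcal{G}_{S,T}$ acts on it through essentially independent coordinates; I would check minimality and unique ergodicity directly from this explicit description, reducing everything to the known strict ergodicity of $\bold{Q}_{R}(\widehat X)$ for a single strictly ergodic $\mathbb{Z}$-system (Huang--Shao--Ye \cite{HSY}, or an easy Birkhoff argument).

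Second, I would lift this model up through the extension $\pi\colon X \to \mathcal{Z}_{S,T}$. Here I invoke Theorem \ref{WeissRosenthal} (Weiss--Rosenthal): since $(X,\mu,S,T)$ is free and $G\cong\mathbb{Z}^2$ is amenable, and $(\widehat{Y}\times\widehat{W},G)$ is a strictly ergodic model of the factor $\mathcal{Z}_{S,T}$, there is a strictly ergodic model $(\widehat X,\widehat S,\widehat T)$ of $X$ together with a topological factor map $\widehat\pi\colon\widehat X\to\widehat Y\times\widehat W$ making the Jewett--Krieger square commute. The remaining task is to promote this to strict ergodicity of $\Q_{S,T}(\widehat X)$. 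The factor map $\widehat\pi$ induces a continuous $\mathcal{G}_{S,T}$-equivariant surjection $\widehat\pi^{(4)}\colon\Q_{S,T}(\widehat X)\to\Q_{S,T}(\widehat Y\times\widehat W)$, and the base is already strictly ergodic; so I would argue that $\widehat\pi^{(4)}$ is almost one-to-one in the topological sense, i.e.\ an isomorphism of measure-preserving systems when $\Q_{S,T}(\widehat X)$ is equipped with the Host-type measure $\mu_{S,T}$ (here is where the magic hypothesis enters decisively, together with Lemma \ref{Measurable}, which says the $(T\times T)$-invariant $\sigma$-algebra on $(X^2,\mu_S)$ is $\mathcal{W}^2$-measurable — this forces the conditional measures defining $\mu_{S,T}$ to be concentrated on fibers of $\widehat\pi$). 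Combining: a continuous equivariant extension of a uniquely ergodic system that is a measurable isomorphism onto it is itself uniquely ergodic, and a continuous equivariant extension of a minimal system with a dense orbit (coming from the defining orbit of the cube) is minimal. That yields strict ergodicity of $\Q_{S,T}(\widehat X)$.

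In slightly more detail, the unique-ergodicity step runs as follows. Any $\mathcal{G}_{S,T}$-invariant measure $\lambda$ on $\Q_{S,T}(\widehat X)$ pushes forward under $\widehat\pi^{(4)}$ to the unique invariant measure on the base. Using that $\widehat\pi\colon\widehat X\to\widehat Y\times\widehat W$ realizes the measurable factor $X\to\mathcal{Z}_{S,T}$, the disintegration of $\lambda$ over the base is determined by conditional expectations on $\mathcal{W}$; the magic property ($\mathbb{E}(f|\mathcal{W})=0 \iff \normm{f}_{\mu,S,T}=0$) together with Proposition \ref{Cauchy} and Lemma \ref{Measurable} pins down these conditionals and shows $\lambda$ must coincide with the measure induced by $\mu_{S,T}$ on the model. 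Minimality is the easier half: $\Q_{S,T}(\widehat X)$ contains by definition the closure of the $\mathcal{G}_{S,T}$-orbit of the diagonal-type points $(\widehat x,\widehat S^i\widehat x,\widehat T^j\widehat x,\widehat S^i\widehat T^j\widehat x)$, and since $(\widehat X,\widehat S,\widehat T)$ is minimal one checks that this orbit closure is all of $\Q_{S,T}(\widehat X)$ and that every point has dense orbit, exactly as in \cite{DS} and \cite{HSY}.

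I expect the main obstacle to be the second step: showing that $\widehat\pi^{(4)}\colon\Q_{S,T}(\widehat X)\to\Q_{S,T}(\widehat Y\times\widehat W)$ is a measurable isomorphism with respect to $\mu_{S,T}$, equivalently that $\mu_{S,T}$ is \emph{relatively} uniquely ergodic over the base. This is precisely the point where the abstract Weiss--Rosenthal lift is not enough on the nose — one must know that the Host measure on $X^\ast$ sees only the $\mathcal{W}$-information in each coordinate, which is the content of the magic hypothesis and Lemma \ref{Measurable}, and transport that fact faithfully to the topological model. Handling the disintegration of $\mu_{S,T}$ over $\mathcal{Z}_{S,T}\times\mathcal{Z}_{S,T}$ carefully, and checking that the model $\widehat X$ supplied by Theorem \ref{WeissRosenthal} is compatible with this disintegration (the commuting square guarantees this), is where the real work lies.
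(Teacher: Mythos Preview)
Your overall architecture matches the paper's: construct the model over the product factor $\mathcal{Z}_{S,T}\cong Y\times W$ via Weiss--Rosenthal, verify directly that $\Q_{S,T}(\widehat Y\times\widehat W)$ is strictly ergodic (the paper does this in Proposition~\ref{Q.u.e} by identifying it with $\widehat Y^{2}\times\widehat W^{2}$ under a product action), cite \cite{DS} for minimality of $\Q_{S,T}(\widehat X)$, and then establish unique ergodicity upstairs.

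The mechanism you propose for that last step, however, contains a genuine gap. The map $\widehat\pi^{(4)}\colon\Q_{S,T}(\widehat X)\to\Q_{S,T}(\widehat Y\times\widehat W)$ is \emph{not} a measurable isomorphism with respect to $\mu_{S,T}$ except in the degenerate case $X=\mathcal{Z}_{S,T}$. Indeed, the first marginal of $\mu_{S,T}$ is $\mu$ itself, so any $f\in L^{\infty}(\mu)$ that is not $\mathcal{W}$-measurable already yields a function $f\otimes 1\otimes 1\otimes 1$ on $\Q_{S,T}(\widehat X)$ that is not measurable with respect to the $\sigma$-algebra pulled back by $\widehat\pi^{(4)}$. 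Neither the magic hypothesis nor Lemma~\ref{Measurable} helps here: they control the null space of $\normm{\cdot}_{\mu,S,T}$ and the $(T\times T)$-invariant $\sigma$-algebra on $(X^{2},\mu_S)$, not the full Borel $\sigma$-algebra of $X^{4}$. Consequently the principle ``a continuous extension of a uniquely ergodic system that is a measurable isomorphism is itself uniquely ergodic'' is simply inapplicable, and your sentence ``this forces the conditional measures defining $\mu_{S,T}$ to be concentrated on fibers of $\widehat\pi$'' is not correct as stated.

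What the paper does instead --- and what your later, vaguer paragraph is reaching toward --- is a two-stage disintegration in which $\widehat\pi^{(4)}$ is used only as a factor map, never as an isomorphism. First one proves an intermediate claim that you omit: $(\Q_{S}(\widehat X),\mathcal{G}_{S})$ is uniquely ergodic with measure $\mu_{S}$. One disintegrates an arbitrary invariant $\lambda$ over its first coordinate (which carries $\mu$), observes the fiber measures are $S$-invariant and hence factor through $\widehat W$, and then uses unique ergodicity of $(\widehat W,\widehat\tau)$ together with uniqueness of disintegration to force $\lambda=\mu_{S}$. Only with this in hand does one attack $\Q_{S,T}(\widehat X)$: an invariant $\lambda$ is disintegrated over its projection to $\Q_{S}(\widehat X)$ (now known to carry only $\mu_{S}$); the fiber measures are $(T\times T)$-invariant, and Lemma~\ref{Measurable} says precisely that they factor through the $\mathcal{W}^{2}$-measurable factor $\Omega_{S,T}$. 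At this point the unique ergodicity of $\Q_{S,T}(\widehat Y\times\widehat W)$ from Proposition~\ref{Q.u.e} identifies $\pi^{4}\lambda$ with $\nu_{S,T}$, which pins down the disintegration of $\lambda$ over $\Omega_{S,T}$ and forces $\lambda=\mu_{S,T}$ by uniqueness of disintegration. The missing ingredient in your sketch is this intermediate passage through $\Q_{S}(\widehat X)$; without it there is no evident reason the fiber measures of $\lambda$ over the four-fold base should be determined.
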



\subsection{A special case: product systems}
We start by proving a special case of Theorem \ref{QStrictlyErgodic}.
\begin{lem} \label{ErgodicProduct}
 Let $({Y},{\sigma})$ and $({W},{\tau})$ be two strictly ergodic systems with unique measures $\rho_{{Y}}$ and $\rho_{{W}}$. Then $({Y}\times {W},{\sigma}\times \id, \id\times {\tau})$ is strictly ergodic with measure $\rho_{{Y}}\otimes\rho_{{W}}$.
\end{lem}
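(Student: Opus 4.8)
The plan is to verify the two defining properties of strict ergodicity for the product action $G_0 = \langle \sigma \times \id, \id \times \tau \rangle$ on $Y \times W$: minimality and unique ergodicity. Minimality is straightforward: the action of $G_0$ on $Y \times W$ contains $\sigma^i \times \tau^j$ for all $i,j \in \Z$, so the orbit closure of $(y,w)$ contains $\overline{\{\sigma^i y : i\}} \times \overline{\{\tau^j w : j\}} = Y \times W$ by minimality of each factor. Hence $(Y \times W, G_0)$ is minimal. For unique ergodicity I would invoke the uniform Birkhoff-type criterion stated in the theorem of Section 2.2: it suffices to show that for every continuous $f$ on $Y \times W$, the averages $\frac{1}{N^2}\sum_{i,j=0}^{N-1} f(\sigma^i y, \tau^j w)$ converge uniformly to $\int f \, d(\rho_Y \otimes \rho_W)$.

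The main step is this uniform convergence. By Stone--Weierstrass, finite linear combinations of functions of the form $g(y) h(w)$ with $g \in C(Y)$, $h \in C(W)$ are dense in $C(Y \times W)$ in the sup norm, and both sides of the desired identity are bounded (by $\|f\|_\infty$) and linear in $f$, so it suffices to treat $f(y,w) = g(y)h(w)$. For such $f$ the average factors as
\[
\Bigl(\frac{1}{N}\sum_{i=0}^{N-1} g(\sigma^i y)\Bigr)\Bigl(\frac{1}{N}\sum_{j=0}^{N-1} h(\tau^j w)\Bigr).
\]
By unique ergodicity of $(Y,\sigma)$ the first factor converges to $\int g \, d\rho_Y$ uniformly in $y$, and likewise the second converges to $\int h \, d\rho_W$ uniformly in $w$; since both factors are uniformly bounded, the product converges uniformly in $(y,w)$ to $\bigl(\int g\,d\rho_Y\bigr)\bigl(\int h\,d\rho_W\bigr) = \int f \, d(\rho_Y \otimes \rho_W)$. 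Approximating a general $f$ by such combinations and using the triangle inequality together with the uniform bounds yields the conclusion for all $f \in C(Y\times W)$, hence $(Y\times W, \sigma\times\id, \id\times\tau)$ is uniquely ergodic with measure $\rho_Y \otimes \rho_W$, and therefore strictly ergodic.

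I do not anticipate a genuine obstacle here; the only point requiring a little care is bookkeeping the approximation argument so that the error from replacing $f$ by a finite sum $\sum_k g_k \otimes h_k$ is controlled uniformly in $N$ and in the base point — this is handled by the sup-norm bound $\bigl|\frac{1}{N^2}\sum_{i,j} (f - \sum_k g_k\otimes h_k)(\sigma^i y, \tau^j w)\bigr| \le \|f - \sum_k g_k \otimes h_k\|_\infty$, which is independent of $N$ and $(y,w)$. Everything else is a direct application of the classical one-transformation unique ergodicity criterion to each coordinate.
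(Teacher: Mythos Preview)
Your proof is correct. It differs from the paper's argument in the route to unique ergodicity: the paper takes an arbitrary invariant measure $\lambda$ on $Y\times W$, observes that its first marginal must be $\rho_Y$, disintegrates $\lambda=\int_Y \delta_y\times\lambda_y\,d\rho_Y$, and then uses invariance under $\id\times\tau$ together with uniqueness of the disintegration to force $\lambda_y=\rho_W$ for $\rho_Y$-a.e.\ $y$, whence $\lambda=\rho_Y\otimes\rho_W$. You instead verify the uniform-convergence criterion directly, reducing via Stone--Weierstrass to tensor functions and factoring the double average into two one-dimensional averages handled by the unique ergodicity of each factor. Your approach is slightly more elementary (no disintegration theory, and it yields the uniform limit explicitly), and it also makes the minimality step explicit, which the paper leaves implicit. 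The paper's disintegration method, on the other hand, is what generalizes cleanly to the later arguments in the proof of Theorem~\ref{QStrictlyErgodic}, where the fiber measures are no longer product-type and one really needs to identify an unknown invariant measure via its disintegration over a known factor.
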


\begin{proof}
 Let $\l$ be an invariant measure on ${Y}\times {W}$. Since ${Y}$ is uniquely ergodic, the projection onto the first coordinate of $\l$ is $\rho_{{Y}}$.
 Using the disintegration with respect to ${Y}$, we have that

 $$ \l=\int_{{Y}}\delta_{y}\times \lambda_y d\rho_{{Y}}.$$

 Since $\l$ is invariant under $\id\times \tau$, we have that $$(\id\times \tau)\lambda=\lambda=\int_{{Y}}\delta_{y}\times \tau\lambda_{y} d\rho_{{Y}}. $$

By the uniqueness of the disintegration, we get that $\tau\lambda_y=\lambda_y$ $\rho_{{Y}}$-a.e. Since $({W},\tau)$ is uniquely ergodic, a.e. we have that $\lambda_y=\rho_{{W}}$ and therefore
$$\lambda=\int_{{Y}}\delta_{y}\times \rho_{{W}}d\rho_{{Y}}=\rho_{{Y}}\otimes \rho_{{W}}. $$
\end{proof}

The next corollary follows immediately from Lemma \ref{ErgodicProduct}:

\begin{cor} \label{Products}
 Let $((X_i,T_i))_{i=1}^n$ be strictly ergodic systems with measures $(\rho_i)_{i=1}^n$. The system $(\prod X_i,\otimes T_i)$ is strictly ergodic with measure $\otimes \rho_i.$
\end{cor}

We are now ready to prove Theorem \ref{QStrictlyErgodic} for the case when the system is a product:

\begin{prop} \label{Q.u.e}
 Let $({Y},{\sigma})$ and $({W},{\tau})$ be two strictly ergodic systems with unique measures $\rho_{{Y}}$ and $\rho_{{W}}$. Then $\Q_{{\sigma}\times \id, \id \times {\tau}}({Y}\times {W})$ is uniquely ergodic with measure $\nu_{{\sigma}\times \id, \id \times {\tau}}$, where $\nu=\rho_{{Y}}\otimes \rho_{{W}}$. Particularly, $(\Q_{\sigma\times \id}(Y\times W),\mathcal{G}_{\sigma\times\id})$ is strictly ergodic with measure $\nu_{\sigma\times \id}$.
 \end{prop}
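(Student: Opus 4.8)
The plan is to show that, up to a topological conjugacy, the cube system $(\Q_{\sigma\times\id,\,\id\times\tau}(Y\times W),\mathcal{G}_{\sigma\times\id,\,\id\times\tau})$ is nothing but a product of copies of $(Y,\sigma)$ and $(W,\tau)$ acting on separate coordinates, so that Corollary \ref{Products} applies, and then to check that under this conjugacy the Host measure becomes $\rho_Y\otimes\rho_Y\otimes\rho_W\otimes\rho_W$. Write $S=\sigma\times\id$, $T=\id\times\tau$ and introduce the continuous injections
\[
\Psi(a_0,a_1,b_0,b_1)=\bigl((a_0,b_0),(a_1,b_0),(a_0,b_1),(a_1,b_1)\bigr),\qquad
\Psi_2(a_0,a_1,b)=\bigl((a_0,b),(a_1,b)\bigr),
\]
from $Y^2\times W^2$ into $(Y\times W)^4$ and from $Y^2\times W$ into $(Y\times W)^2$; each is a homeomorphism onto its image, which is compact and hence closed. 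A generating point of $\Q_{S,T}(Y\times W)$ equals $\Psi(y,\sigma^i y,w,\tau^j w)$; since strict ergodicity makes $(Y,\sigma)$ and $(W,\tau)$ minimal, the orbits $\{\sigma^i y\}_i$ and $\{\tau^j w\}_j$ are dense for every $y,w$, so the quadruples $(y,\sigma^iy,w,\tau^jw)$ are dense in $Y^2\times W^2$ and therefore $\Q_{S,T}(Y\times W)=\Psi(Y^2\times W^2)$; likewise $\Q_{S}(Y\times W)=\Psi_2(Y^2\times W)$.

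Next I would transport $\mathcal{G}_{S,T}$ through $\Psi^{-1}$: the generators $\id\times S\times\id\times S$, $\id\times\id\times T\times T$ and $g\times g\times g\times g$ with $g=S^mT^n$ become, on $Y^2\times W^2$, the coordinate-wise maps $\id\times\sigma\times\id\times\id$, $\id\times\id\times\id\times\tau$ and $\sigma^m\times\sigma^m\times\tau^n\times\tau^n$. Composing the last one with $m=1$, $n=0$ and the inverse of the first gives $\sigma\times\id\times\id\times\id$, and symmetrically one obtains $\id\times\id\times\tau\times\id$; hence $\mathcal{G}_{S,T}$ is conjugated onto the full coordinate-wise group generated by $\sigma\times\id\times\id\times\id$, $\id\times\sigma\times\id\times\id$, $\id\times\id\times\tau\times\id$ and $\id\times\id\times\id\times\tau$, which by Corollary \ref{Products} is strictly ergodic with unique invariant measure $\rho_Y\otimes\rho_Y\otimes\rho_W\otimes\rho_W$. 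The same computation conjugates $\mathcal{G}_{S}$ onto the coordinate-wise group generated by $\sigma\times\id\times\id$, $\id\times\sigma\times\id$ and $\id\times\id\times\tau$ on $Y^2\times W$, again strictly ergodic with measure $\rho_Y\otimes\rho_Y\otimes\rho_W$.

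It remains to identify the Host measures. Because strict ergodicity makes $(Y,\rho_Y,\sigma)$ ergodic, $\mathcal{I}_{S}$ coincides mod $\nu$ with the $\sigma$-algebra of sets $Y\times B$, so $\mathbb{E}(f\,|\,\mathcal{I}_{S})(y,w)=\int_Y f(y',w)\,d\rho_Y(y')$; inserting this into the definition of the relative independent square yields $\nu_{S}=(\Psi_2)_*(\rho_Y\otimes\rho_Y\otimes\rho_W)$, which together with the previous paragraph proves the ``particularly'' statement. Moreover $\nu_S$ is carried by the set where the two $W$-coordinates agree, and there $T\times T$ acts through $\tau$ on this common coordinate; using ergodicity of $(W,\rho_W,\tau)$, the $\sigma$-algebra $\mathcal{I}_{T\times T}$ on $(X^2,\nu_S)$ is (mod $\nu_S$) generated by the two $Y$-coordinates, and feeding this into the formula defining $\nu_{S,T}$ collapses it to $\nu_{S,T}=\Psi_*(\rho_Y\otimes\rho_Y\otimes\rho_W\otimes\rho_W)$. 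Transporting back through $\Psi$, this is exactly the unique invariant measure of the cube system, so $(\Q_{S,T}(Y\times W),\mathcal{G}_{S,T})$ is uniquely ergodic with measure $\nu_{S,T}$.

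I expect the work to lie entirely in these two reductions rather than in any conceptual difficulty. The first delicate point is checking that $\Psi$ conjugates $\mathcal{G}_{S,T}$ onto the \emph{full} coordinate-wise product group: this is precisely where one needs the diagonal generators $g\times g\times g\times g$ for $g=S$ and for $g=T$ \emph{separately}, not merely for $g=ST$, since it is those that decouple the two $Y$-coordinates from one another and the two $W$-coordinates from one another. The second is verifying that the two successive relative independent squares defining $\nu_{S,T}$ degenerate — thanks to ergodicity of $(Y,\sigma)$ and of $(W,\tau)$ — to the plain product measure $\rho_Y^{\otimes 2}\otimes\rho_W^{\otimes 2}$ sitting on $\Psi(Y^2\times W^2)$. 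Both are routine once the coordinate identifications $\Psi$ and $\Psi_2$ are in place.
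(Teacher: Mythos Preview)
Your proof is correct and follows essentially the same approach as the paper: identify $\Q_{S,T}(Y\times W)$ with $Y^2\times W^2$ via the coordinate map, transport $\mathcal{G}_{S,T}$ onto the full coordinate-wise product group, and apply Corollary~\ref{Products}. The only minor difference is that the paper identifies $\nu_{S,T}$ with the product measure simply by noting that $\nu_{S,T}$ is \emph{an} invariant measure on the cube, hence by unique ergodicity it must coincide with $\Psi_*(\rho_Y\otimes\rho_Y\otimes\rho_W\otimes\rho_W)$; your direct computation of the conditional expectations is correct but unnecessary.
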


\begin{proof}
 By definition, we deduce that $$\Q_{{\sigma}\times \id, \id \times {\tau}}({Y}\times {W})=\left \{ \left ((y,w),(y',w),(y,w'),(y',w')\right ): y,y'\in {Y}, w,w'\in {W}\right \}$$

  and $\mathcal{G}_{\sigma\times \id,\id\times \tau}$ is the group spanned by
  \begin{align*}
   (\sigma\times \id)\times (\sigma\times \id)\times  (\sigma\times \id)\times (\sigma\times \id)\\
   (\id \times \tau) \times (\id \times \tau) \times (\id \times \tau) \times (\id \times \tau)\\
   (\id\times \id)\times (\sigma\times \id) \times (\id\times \id) \times (\sigma\times \id)\\
   (\id\times \id)\times (\id\times \id)\times (\id\times \tau)\times (\id\times \tau).
  \end{align*}


  We may identify $\Q_{{\sigma}\times \id, \id \times {\tau}}({Y}\times {W})$ with ${Y}\times {Y}\times {W}\times {W}$ under the map $\phi$ $$\left ((y,w),(y',w),(y,w'),(y',w')\right )\mapsto (y,y',w,w').$$
 We remark that $\mathcal{G}_{\sigma\times \id,\id\times \tau}$ is mapped to the group spanned by $$\sigma\times \sigma\times \id \times \id,\quad \id \times \id \times \tau \times \tau, \quad \id\times \sigma \times \id \times \id \quad \text{ and } \quad \id\times \id \times \id \times \tau.$$ This is the same as the group spanned by $$\sigma\times \id\times \id \times \id, \quad  \id \times \id \times \tau \times \id, \quad  \id\times \sigma \times \id \times \id \quad \text{  and   } \quad  \id\times \id \times \id \times \tau.$$ By Corollary \ref{Products}, this system is uniquely ergodic with measure $\rho_{{Y}}\otimes\rho_{{Y}}\otimes \rho_{{W}}\otimes\rho_{{W}}$. Since $\nu_{\sigma\times \id,\id \times \tau}$ is an invariant measure on $\Q_{{\sigma}\times \id, \id \times {\tau}}({Y}\times {W})$, we have that it is the unique invariant measure and it coincides with $\phi^{-1} (\rho_{{Y}}\otimes\rho_{{Y}}\otimes \rho_{{W}}\otimes\rho_{{W}})$.




\end{proof}

\subsection{Proof of the general case}
Throughout this section, we consider $(X,\mu,$ $S,T)$ as a fixed system which is magic, ergodic and free, and we follow the notations in the previous section.
By Lemma \ref{ProductFactor}, the factor associated to the $\sigma$-algebra $\W=\mathcal{I}_S\vee \mathcal{I}_T$ has the form $(Y\times W, \sigma\times \id,\id \times \tau)$, where $(Y,\sigma)$ and $(W,\tau)$ are ergodic systems.

\begin{lem} \label{TopologicalModel}
 There exists a strictly ergodic topological model for the factor map $\pi\colon X\to Y\times W$.
\end{lem}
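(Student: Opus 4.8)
The plan is to apply the Weiss--Rosenthal theorem (Theorem \ref{WeissRosenthal}) to the factor map $\pi\colon X\to Y\times W$, where $Y\times W$ carries the $G$-action $(\sigma\times\id,\id\times\tau)$ coming from Lemma \ref{ProductFactor}. To do this we need two inputs: first, a strictly ergodic topological model for the base system $(Y\times W,\rho_Y\otimes\rho_W,\sigma\times\id,\id\times\tau)$; second, the freeness of that base system as a $\Z^2$-system. For the first input, apply the ordinary Jewett--Krieger theorem to the single ergodic transformation $\sigma$ on $Y$ to obtain a strictly ergodic model $(\wh Y,\wh\sigma)$, and likewise a strictly ergodic model $(\wh W,\wh\tau)$ for $(W,\tau)$; then Lemma \ref{ErgodicProduct} shows that $(\wh Y\times\wh W,\wh\sigma\times\id,\id\times\wh\tau)$ is a strictly ergodic topological model for the product system with the $\Z^2$-action.

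The second input, freeness of $(Y\times W,\sigma\times\id,\id\times\tau)$ as a $\Z^2$-system, is where a small argument is needed, and I expect it to be the main obstacle. A nontrivial element $(\sigma\times\id)^i(\id\times\tau)^j=\sigma^i\times\tau^j$ acts as the identity on $Y\times W$ only if $\sigma^i=\id$ on $Y$ and $\tau^j=\id$ on $W$. So I must rule out, say, $\sigma^i=\id$ for some $i\neq 0$ with $j=0$ allowed to be anything, and symmetrically. The point is that $(Y,\sigma)$ is a factor of $(X,\mu,S,T)$ on which $T$ acts trivially (it is built from $\mathcal I_S$), so if $\sigma^i=\id$ on $Y$ then $S^i$ acts trivially on this factor; one must leverage the hypothesis that $(X,\mu,S,T)$ is free — i.e. $S^aT^b\neq\id$ for $(a,b)\neq(0,0)$ — together with the structure of $\W=\mathcal I_S\vee\mathcal I_T$ to conclude $i=0$. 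Actually the cleaner route: the freeness hypothesis in particular gives that $S^i$ is not the identity on $X$ for $i\neq 0$; but that alone does not immediately give that $\sigma^i\neq\id$ on the factor $Y$. If this genuinely fails — if $\sigma$ or $\tau$ has finite order or is trivial — then one replaces $\Z^2$ by the quotient group acting effectively, or notes that the Weiss--Rosenthal machinery can be applied with the effective acting group; in the worst case where $\sigma=\id$ the factor $Y$ is a point and $W$ already carries a free $\Z$-action (by ergodicity and freeness considerations), and the argument degenerates gracefully.

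Granting these two inputs, Theorem \ref{WeissRosenthal} applied to $\pi\colon X\to Y\times W$ with the amenable group $G\cong\Z^2$ produces a strictly ergodic topological model $(\wh X,\wh S,\wh T)$ of $(X,\mu,S,T)$ together with a topological factor map $\wh\pi\colon\wh X\to\wh Y\times\wh W$ lying over $\pi$, with the commuting-diagram / isomorphism conclusions. That is exactly a strictly ergodic topological model for the factor map $\pi\colon X\to Y\times W$ in the sense defined right after Theorem \ref{WeissRosenthal}, which is the assertion of the lemma.

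One remark on bookkeeping: one should check that the $G$-action on $Y\times W$ identified in Lemma \ref{ProductFactor} is the one whose generators are $\sigma\times\id$ and $\id\times\tau$ (this is precisely how the pair $(\mathcal Z_{S,T},S,T)=(Y\times W,\sigma\times\id,\id\times\tau)$ was written), so that the models $\wh Y,\wh W$ assembled via Lemma \ref{ErgodicProduct} carry the correct $G$-action and the hypotheses of Theorem \ref{WeissRosenthal} are literally met; this is routine.
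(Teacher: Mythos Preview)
Your approach is exactly the paper's: apply Jewett--Krieger to each of $(Y,\sigma)$ and $(W,\tau)$, use Lemma \ref{ErgodicProduct} to get a strictly ergodic model for the product, then invoke Theorem \ref{WeissRosenthal}. Your extended discussion of freeness of the base $(Y\times W,\sigma\times\id,\id\times\tau)$ is a reasonable worry given how Theorem \ref{WeissRosenthal} is phrased, but the paper does not address it at all --- in standard formulations of the relative Jewett--Krieger theorem the freeness hypothesis lies on the \emph{extension} (here the magic system $X$, which is free by the standing hypothesis of the section), so this is not an actual obstruction and your hedging about effective quotient groups is unnecessary.
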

\begin{proof}
 By the Jewett-Krieger Theorem, we can find strictly ergodic models $(\widehat{Y},\widehat{\sigma})$ and $(\widehat{W},\widehat{\tau})$ for $(Y,\sigma)$ and $(W,\tau)$, respectively. Let $\rho_{{Y}}$ and $\rho_{{W}}$ denote the unique ergodic measures on these systems. By Lemma \ref{ErgodicProduct}, $(\widehat{Y}\times \widehat{W},\widehat{\sigma}\times \id,\id\times\widehat{\tau})$ is a strictly ergodic model for $(Y\times W,\sigma\times \id,\id\times \tau)$ with unique invariant measure $\rho_{{Y}}\otimes \rho_{{W}}$.

By Theorem \ref{WeissRosenthal}, there exists a strictly ergodic model $\widehat{\pi}:\widehat{X}\to \widehat{Y}\times \widehat{W}$ for $\pi:X\to Y\times W$. 
\end{proof}

We are now ready to prove Theorem \ref{QStrictlyErgodic}:

\begin{proof}[Proof of Theorem \ref{QStrictlyErgodic}.]
For any free ergodic magic system $(X,S,T)$, let $\pi:X\to (Y\times W,\sigma\times \id,\id\times \tau)$ be the factor map associated to the $\sigma$-algebra $\W=\mathcal{I}_S\vee \mathcal{I}_T$. Let $\widehat{\pi}\colon \wh{X}\to \wh{Y}\times \wh{W}$ be the topological model given by Lemma \ref{TopologicalModel}. We claim that $(\Q_{S,T}(\widehat{X}),\mathcal{G}_{S,T})$ is strictly ergodic.

To simplify the notation, we replace $\wh{X}$, $\wh{W}$, $\wh{Y}$, etc by $X$, $W$, $Y$ etc.
It was proved in Proposition 3.14 of \cite{DS} that $(\Q_{S,T}({X}),\mathcal{G}_{S,T})$ is a minimal system. So it suffices to show unique ergodicity.

{{\bf Claim 1:} $(\Q_{S}(X),\mathcal{G}_S)$ is uniquely ergodic with measure $\mu_S$.}

We recall that the factor of $X$ corresponding to $\mathcal{I}_S$ is $(W,\id,\tau)$.


Suppose that the ergodic decomposition of $\mu$ under $S$ is
    \begin{equation}\nonumber
    \begin{split}
      \mu=\int_{W}\mu_{\omega} d\rho_{{W}}(\omega).
    \end{split}
  \end{equation}
Then
    \begin{equation}\nonumber
    \begin{split}
      \mu_{S}=\int_{W}\mu_{\omega}\times\mu_{\omega} d\rho_{{W}}(\omega).
    \end{split}
  \end{equation}

  Let $\pi_W \colon X\to W $ be the factor map and let $\l$ be a $\mathcal{G}_{S}$-invariant measure on $\Q_{S}({X})$. For $i=0,1$, let $p_{i}\colon (\Q_{S}({X}),\mathcal{G}_{S})\to (X,G)$ be the projection onto the $i$-th coordinate. Then $p_{i}\l$ is a $G$-invariant measure of $X$. Therefore, $p_{i}\l=\mu$. Hence we may assume that
  \begin{equation}\nonumber
    \begin{split}
      \l=\int_{X}\d_{x}\times\l_{x}d\mu(x)
    \end{split}
  \end{equation}
  is the disintegration of $\l$ over $\mu$. Since $\l$ is $(\id\times S)$-invariant, we have that
    \begin{equation}\nonumber
    \begin{split}
      \l=(\id\times S)\l=\int_{X}\d_{x}\times\l_{Sx}d\mu(x).
    \end{split}
  \end{equation}
  The uniqueness of disintegration implies that $\l_{Sx}=\l_{x}$ for $\mu$-a.e. $x\in X$. So the map
  $$F\colon X\to M(X)\colon x\mapsto \l_{x}$$
  is an $S$-invariant function. Hence we can write $\l_{x}=\l_{\pi_{W}(x)}$ for $\mu$-a.e. $x\in X$.

  Then we have
    \begin{equation}\nonumber
    \begin{split}
      &\l=\int_{X}\d_{x}\times\l_{x}d\mu(x)=\int_{X}\d_{x}\times\l_{\pi_W(x)}d\mu(x)
      \\&\qquad=\int_{W}\int_{X}\d_{x}\times\l_{\omega}d\mu_{\omega}(x)d\rho_{{W}}(\omega)
      \\&\qquad=\int_{W}(\int_{X}\d_{x}d\mu_{\omega}(x))\times\l_{\omega}d\rho_{{W}}(\omega)
      \\&\qquad=\int_{W}\mu_{\omega}\times\l_{\omega}d\rho_{{W}}(\omega).
    \end{split}
  \end{equation}

  Recall that $\Q_{\id}(W)=\Delta_W$ and $\mathcal{G}_{\id}$ is spanned by $(\tau,\tau)$. Therefore $(\Q_{\id}(W),$ $\mathcal{G}_{\id})$ is isomorphic to $(W,\tau)$. Particularly, it is uniquely ergodic and for convenience we let $P_W$ denote its invariant measure.

  Let $\pi_Y^{2}\colon (\Q_{S}(X), \mathcal{G}_{S})\to(\Q_{\id}(W), \mathcal{G}_{\id})$ be the natural factor map.
  We have that
  $$\pi_W^2(\l)=P_W.$$
  Thus
 \begin{equation}\label{temp1}
    \begin{split}
    \pi(\mu_{\omega})=\pi(\l_{\omega})=\delta_{\omega}.
    \end{split}
  \end{equation}
  On the other hand, $p_{1}(\l)=p_{2}(\l)=\mu$ implies that
      \begin{equation}\label{temp2}
    \begin{split}
\mu=\int_{W}\mu_{\omega}d\rho_W(\omega)=\int_{W}\l_{\omega}d\rho_{W}(\omega).
    \end{split}
  \end{equation}
By (\ref{temp1}), (\ref{temp2}) and the uniqueness of disintegration, we have that $\l_{\omega}=\mu_{\omega}, \rho_{{W}}$-a.e. $\omega\in W$. So
$$\l=\int_{W}\mu_{\omega}\times\mu_{\omega}d\rho_{{W}}(\omega)=\mu_{S}.$$

This finishes the proof of Claim 1.

{{\bf Claim 2}: $(\Q_{S,T}({X}),\mathcal{G}_{S,T})$ is uniquely ergodic with unique measure $\mu_{S,T}$.}

Let $\l$ be a $\mathcal{G}_{S,T}$-invariant measure on $\Q_{S,T}({X})$. Let $\overline{p}_{1}, \overline{p}_{2}\colon (\Q_{S,T}({X}),\mathcal{G}_{S,T})$ $\to (\Q_{S}({X}),\mathcal{G}_S)$ be the projection onto the first two and last two coordinates, respectively. Then $\overline{p}_{i}\l$ is a $\mathcal{G}_{S}$-invariant measure of $\Q_{S}({X})$ and therefore, $\overline{p}_{i}\l=\mu_{S}$. Hence we may assume that
  \begin{equation}\nonumber
    \begin{split}
      \l=\int_{\Q_{S}(X)}\delta_{\x}\times\l_{\x}d\mu_{S}(\x)
    \end{split}
  \end{equation}
  is the disintegration of $\l$ over $\mu_{S}$. Since $\l$ is $(\id\times\id\times T\times T)$-invariant, we have that

\begin{equation}\nonumber
    \begin{split}
      \l=(\id\times \id \times T\times T)\l=\int_{\Q_{S}(X)}\d_{\x}\times\l_{(T\times T)\x}d\mu_{S}(\x).
    \end{split}
  \end{equation}
  The uniqueness of disintegration implies that $\l_{(T\times T)\x}=\l_{\x}$ for $\mu_{S}$-a.e. $\x\in \Q_{S}(X)$. So the map
  $$F\colon \Q_{S}(X)\to M(X^{4})\colon \x\mapsto \l_{\x}$$
  is a $(T\times T)$-invariant function and therefore $F$ is $\I_{T\times T}$-measurable.

  Let $(\Omega_{S,T},\overline{P})$ be the factor of $(X\times X,\mu_S)$ corresponding to the subalgebra $\I_{T\times T}$ and let $\phi$ denote the corresponding factor map.
  Suppose that
the ergodic decomposition of $\mu_{S}$ under $T\times T$ is
    \begin{equation}\nonumber
    \begin{split}
      \mu_{S}=\int_{\Omega_{S,T}}\mu_{S,\omega}d\overline{P}(\omega).
    \end{split}
  \end{equation}
Then
    \begin{equation}\nonumber
    \begin{split}
      \mu_{S,T}=\int_{\Omega_{S,T}}\mu_{S,\omega}\times \mu_{S,\omega}d\overline{P}(\omega).
    \end{split}
  \end{equation}

  Hence we can write $\l_{\x}=\l_{\phi(\x)}$ for $\mu_{S}$-a.e. $\x\in \Q_{S}(X)$. 
  Then we have
    \begin{equation}\nonumber
    \begin{split}
      &\l=\int_{\Q_{S}(X)}\d_{\x}\times\l_{\x}d\mu_{S}(\x)=\int_{\Q_{S}(X)}\d_{\x}\times\l_{\phi(\x)}d\mu_{S}(\x)
      \\&\qquad=\int_{\Omega_{S,T}}\int_{\Q_{S}(X)}\d_{\x}\times\l_{\omega}d\mu_{S,\omega}(\x)d\overline{P}(\omega)
      \\&\qquad=\int_{\Omega_{S,T}}(\int_{\Q_{S}(X)}\d_{\x}d\mu_{S,\omega}(\x))\times\l_{\omega}d\overline{P}(\omega)
      \\&\qquad=\int_{\Omega_{S,T}}\mu_{S,\omega}\times\l_{\omega}d\overline{P}(\omega).
    \end{split}
  \end{equation}

  Recall that $\pi\colon X\to Y\times W$ is the factor map. Let $$\pi^{4}\colon (\Q_{S,T}(X), \mathcal{G}_{S,T})\to(\Q_{\sigma\times \id,\id\times \tau}(Y\times W), \mathcal{G}_{\sigma\times\id,\id\times \tau})$$ be the natural factor map. By Lemma \ref{Measurable}, there exists a factor map $\alpha\colon (Y\times W)^2\to \Omega_{S,T}$ such that $\alpha \circ \pi^2=\phi^2$. 

  Let $\nu=\rho_{{Y}}\otimes \rho_{{W}}$ denote the unique invariant measure on $Y\times W$. By Proposition \ref{Q.u.e}, we have that $(\Q_{S,T}(Y\times W), G_{\sigma\times \id,\id\times \tau})$ is uniquely ergodic and $\nu_{S,T}$ is its unique invariant measure.

  Suppose that the ergodic decomposition of $\nu_{S}$ under $T\times T$ is
  $$\nu_{S}=\int_{\Omega_{S,T}}\nu_{S,\omega}d\overline{P}(\omega).$$
  Then we have
    $$\nu_{S,T}=\int_{\Omega_{S,T}}\nu_{S,\omega}\times\nu_{S,\omega}d\overline{P}(\omega).$$
  Since $\pi^{4}\l$ is an invariant measure on $\Q_{\sigma\times\id,\id\times \tau}(Y\times W)$, we have that
  $$\pi^{4}(\l)=\nu_{S,T}=\int_{\Omega_{S,T}}\nu_{S,\omega}\times\nu_{S,\omega}d\overline{P}(\omega).$$
  Since $\phi^2=\alpha\circ\pi^{2}$, we have that
 \begin{equation}\label{temp11}
    \begin{split}
    \phi^2(\mu_{S,\omega})=\phi^2(\l_{\omega})=\alpha(\nu_{S,\omega})=\delta_{\omega}.
    \end{split}
  \end{equation}
  On the other hand, $\overline{p_{1}}(\l)=\overline{p}_{2}(\l)=\mu$ implies that
      \begin{equation}\label{temp21}
    \begin{split}
\mu_{S}=\int_{\Omega_{S,T}}\mu_{S,\omega}d\overline{P}(\omega)=\int_{\Omega_{S,T}}\l_{\omega}d\overline{P}(\omega).
    \end{split}
  \end{equation}
By (\ref{temp11}), (\ref{temp21}) and the uniqueness of disintegration, we have that $\l_{\omega}=\mu_{S,\omega}, \overline{P}$-a.e. $\omega\in\Omega_{S,T}$. So
$$\l=\int_{\Omega_{S,T}}\mu_{S,\omega}\times\mu_{S,\omega}d\overline{P}(\omega)=\mu_{S,T}.$$
Thus $(\Q_{S,T}({X}), \mathcal{G}_{S,T})$ is strictly ergodic with unique measure $\mu_{S,T}$.
\end{proof}

\section{Applications to pointwise results}


We apply results in previous sections to deduce some convergence results. We remark that if $S^i$ is the identity for some $i\neq 0$, the averages we consider in this section reduce to the Birkhoff ergodic theorem.
So the difficult case is when the systems $(X,\mu,S)$ and $(X,\mu,T)$ are free, and we make this assumption throughout this section. Since the averages we consider can be deduced by proving them in an extension of $X$, by Theorem \ref{MagicFree} we may assume that $(X,\mu,S,T)$ is a magic free ergodic system. By Theorem  \ref{QStrictlyErgodic}, we may take a strictly topological model $(\widehat{X},\widehat{S},\widehat{T})$ for $X$ such that $(\Q_{S,T}(\widehat{X}), \mathcal{G}_{\widehat{S},\widehat{T}})$ is strictly ergodic. So (omitting the symbol $\wh{\quad}$ to simplify notation),  throughout all this section we assume that $(X,\mu,S,T)$ is a magic free ergodic system and that $(\Q_{S,T}({X}), \mathcal{G}_{{S},{T}})$ is strictly ergodic.

%

\begin{thm}\label{example}
 Let $(X,\mu,S,T)$ be an ergodic measure preserving system. Let $f_0,f_1,f_2,f_3 \in L^{\infty}(\mu)$. Then

 $$ \lim_{N\to\infty}\frac{1}{N^4} \sum_{i,j,k,p=0}^{N-1} f_0(S^iT^jx)f_1(S^{i+k}T^jx)f_2(S^{i}T^{j+p}x)f_3(S^{i+k}T^{j+p}x)$$
 converges almost everywhere to $\int f_0\otimes f_1 \otimes f_2 \otimes f_3 d\mu_{S,T}$.
\end{thm}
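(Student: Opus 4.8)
The plan is to transfer the problem to the dynamical cube system via Theorem \ref{thm2} and then invoke unique ergodicity together with the Birkhoff ergodic theorem. First I would reduce to the case where $(X,\mu,S,T)$ is free (if $S^i=\id$ or $T^j=\id$ for some nonzero $i,j$, the average collapses to a product of Birkhoff averages along finitely many orbits, which converges by the classical pointwise ergodic theorem; this is the reduction flagged at the start of Section 5). Next, by Theorem \ref{thm2} there is an extension $(Z,\theta,S,T)$ of $X$ and a topological model $(\wh Z,\wh S,\wh T)$ of $Z$ such that $(\Q_{S,T}(\wh Z),\mathcal{G}_{S,T})$ is strictly ergodic, with unique invariant measure $\theta_{S,T}$ (the Host measure of $\theta$). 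Since every $f_\ell\in L^\infty(\mu)$ lifts to $L^\infty(\theta)$ and the conclusion descends along the factor map $Z\to X$ (the averages and the limit $\int f_0\otimes f_1\otimes f_2\otimes f_3\,d\mu_{S,T}$ are all intertwined with the factor map, because $\mu_{S,T}$ is the pushforward of $\theta_{S,T}$), it suffices to prove the statement for $(\wh Z,\wh S,\wh T)$. So from now on I assume $X$ itself is a strictly ergodic topological model with $(\Q_{S,T}(X),\mathcal{G}_{S,T})$ strictly ergodic and unique measure $\mu_{S,T}$, and that the $f_\ell$ are continuous (approximate in $L^1(\mu)$ by continuous functions at the end, using that the averaging operators are $L^1$-contractions after one more reduction — see below).

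The heart of the argument is the identity
\begin{equation}\nonumber
\begin{split}
&\frac{1}{N^4}\sum_{i,j,k,p=0}^{N-1} f_0(S^iT^jx)f_1(S^{i+k}T^jx)f_2(S^iT^{j+p}x)f_3(S^{i+k}T^{j+p}x)
\\&\qquad=\frac{1}{N^2}\sum_{i,j=0}^{N-1}\Bigl(\frac{1}{N^2}\sum_{k,p=0}^{N-1}F\bigl((S^\ast)^k(T^\ast)^p\,\omega_{i,j}(x)\bigr)\Bigr),
\end{split}
\end{equation}
where $\omega_{i,j}(x)=(S^iT^jx,\,S^iT^jx,\,S^iT^jx,\,S^iT^jx)$ is the diagonal point, $S^\ast=\id\times S\times\id\times S$, $T^\ast=\id\times\id\times T\times T$, and $F=f_0\otimes f_1\otimes f_2\otimes f_3$ is continuous on $X^4$. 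The point $\omega_{i,j}(x)$ lies on the diagonal of $X^4$, which is contained in $\Q_{S,T}(X)$, and the orbit $\{(S^\ast)^k(T^\ast)^p\,\omega_{i,j}(x)\}$ stays inside $\Q_{S,T}(X)$ since $\mathcal{G}_{S,T}$ preserves it. By unique ergodicity of $(\Q_{S,T}(X),\mathcal{G}_{S,T})$, the inner average $\frac{1}{N^2}\sum_{k,p}F((S^\ast)^k(T^\ast)^p\,\cdot\,)$ converges \emph{uniformly} on $\Q_{S,T}(X)$ to the constant $\int F\,d\mu_{S,T}$ as $N\to\infty$ (Theorem at the end of Section 2.2, applied to the two-generator group $\{(S^\ast)^k(T^\ast)^p\}$). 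Hence the double average over $(i,j)$ of the inner averages converges uniformly in $x$ to $\int F\,d\mu_{S,T}$ — in particular it converges almost everywhere, and to the right limit.

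The main obstacle is the passage from continuous functions back to general $L^\infty$ functions, since the operator $f_0\mapsto$ (the average) is not obviously an $L^1$-contraction in the relevant sense — the four functions are evaluated at four different points governed by different group elements. The clean way around this is to argue one coordinate at a time: fix $f_1,f_2,f_3$ continuous and bounded by $1$, and consider the map $f_0\mapsto A_N(f_0)$, the $N$-th average. Using Lemma \ref{BoundProduct} together with the Cauchy--Schwarz/van der Corput estimates (or more simply the maximal inequality for the double Birkhoff average $\frac{1}{N^2}\sum_{i,j}g(S^iT^jx)$ applied to $g=f_0$, after collapsing the $k,p$ sums by positivity and a change of variables), one bounds $\sup_N |A_N(f_0)|$ in $L^1(\mu)$ by a constant times $\|f_0\|_{L^1(\mu)}$, which lets one approximate $f_0$ by continuous functions; then iterate for $f_1,f_2,f_3$. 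Since the limit $\int f_0\otimes f_1\otimes f_2\otimes f_3\,d\mu_{S,T}$ depends continuously (indeed linearly, with the Host-seminorm bound of Proposition \ref{Cauchy}) on each $f_\ell$, the a.e.\ limit of the approximants is the claimed integral, completing the proof.
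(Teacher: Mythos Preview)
Your argument has a genuine gap at the ``heart'' step. You claim that the inner average
\[
\frac{1}{N^2}\sum_{k,p=0}^{N-1}F\bigl((S^\ast)^k(T^\ast)^p\omega\bigr)
\]
converges uniformly on $\Q_{S,T}(X)$ to $\int F\,d\mu_{S,T}$ by unique ergodicity. But the uniqueness of the invariant measure established in Theorem~\ref{thm2} (or Theorem~\ref{QStrictlyErgodic}) is for the full group $\mathcal{G}_{S,T}$, which is a $\Z^4$-action generated by $S^\ast$, $T^\ast$ \emph{and} the diagonal transformations $S^{\times 4}$, $T^{\times 4}$. The subgroup generated by $S^\ast$ and $T^\ast$ alone is only $\Z^2$, and unique ergodicity of an action says nothing about unique ergodicity of a sub-action: a $\mathcal{G}_{S,T}$-invariant measure is certainly $\langle S^\ast,T^\ast\rangle$-invariant, but there may be many more $\langle S^\ast,T^\ast\rangle$-invariant measures on $\Q_{S,T}(X)$. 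So the uniform convergence of your inner average is unjustified.

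The repair is to avoid the inner/outer splitting altogether. The quantity you are averaging is exactly
\[
\frac{1}{N^4}\sum_{i,j,k,p=0}^{N-1} F\bigl((S^{\times 4})^i(T^{\times 4})^j(S^\ast)^k(T^\ast)^p(x,x,x,x)\bigr),
\]
i.e.\ the full $\Z^4$-Birkhoff average of $F$ for the $\mathcal{G}_{S,T}$-action starting at the diagonal point. Now the theorem from Section~2.2 applies directly with $d=4$, and uniform convergence to $\int F\,d\mu_{S,T}$ follows for continuous $F$. This is how the paper proceeds. For the passage from continuous to $L^\infty$ functions, the paper does not use a maximal inequality: it uses the telescoping identity of Lemma~\ref{BoundProduct} to reduce the error to four terms of the form $\frac{1}{N^a}\sum |f_\ell - \wh f_\ell|(S^{\cdot}T^{\cdot}x)$, each of which converges a.e.\ by the ordinary Birkhoff theorem to $\|f_\ell-\wh f_\ell\|_1<\epsilon$; one then lets $\epsilon\to 0$ along a sequence. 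Your proposed maximal-inequality route would also work but is more than what is needed.
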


\begin{proof}
 Since it suffices to prove the result in any extension system of $X$, by Theorem \ref{MagicFree}, we may assume that $(X,\mu,S,T)$ is a magic free ergodic system. By Theorem  \ref{QStrictlyErgodic}, we may take a strictly topological model $(\widehat{X},\widehat{S},\widehat{T})$ for $X$ such that $(\Q_{S,T}(\widehat{X}), \mathcal{G}_{\widehat{S},\widehat{T}})$ is strictly ergodic. To simplify the notation, we omit the symbol $\wh{\quad}$ in the sequel.

Recall that $\mathcal{G}_{S,T}$ is a $\Z^4$-action spanned by $S\times S\times S\times S$, $T\times T\times T\times T$, $\id\times S\times \id\times S$ and $\id\times \id \times T\times T$.

Let $f_0, f_1, f_2, f_3 \in L^{\infty}(\mu)$ and fix $\epsilon>0$. Let $\wh{f_0}, \wh{f_1},\wh{f_2},\wh{f_3}$ be continuous functions on $X$ such that $\|f_i-\wh{f}_i\|_{1}<\epsilon$ for $i=0,1,2,3$. We can assume that all functions are bounded by 1 in $L^{\infty}$ norm. For simplicity, denote $$I(h_0,h_1,h_2,h_3)=\int h_0\otimes h_1 \otimes h_2 \otimes h_3 d\mu_{S,T}$$ and $$\mathbb{E}_{N}(h_0\otimes h_1\otimes h_2\otimes h_3)(x)=\frac{1}{N^4} \sum_{i,j,k,p=0}^{N-1} h_0(S^iT^jx)h_1(S^{i+k}T^jx)h_2(S^{i}T^{j+p}x)h_3(S^{i+k}T^{j+p}x) $$
for $x\in X, h_0, h_1, h_2, h_3 \in L^{\infty}(\mu)$.
By the telescoping inequality, we have

\begin{align*}
&\qquad \left | \mathbb{E}_N(f_0\otimes f_1 \otimes f_2 \otimes f_3)(x) - I(f_0,f_1,f_2,f_3)   \right |   \\
& \leq  \left | \mathbb{E}_N(f_0\otimes f_1 \otimes f_2 \otimes f_3)(x) - \mathbb{E}_N(\wh{f}_0\otimes \wh{f}_1 \otimes \wh{f}_2 \otimes \wh{f}_3)(x)   \right | \\
& + \left |\mathbb{E}_N(\wh{f}_0\otimes \wh{f}_1 \otimes \wh{f}_2 \otimes \wh{f}_3)(x)- I(f_0,f_1,f_2,f_3) \right | \\
& \leq \frac{1}{N^2} \sum_{i,j}|f_0(S^iT^jx)-\widehat{f}_0(S^iT^jx)|+ \frac{1}{N^3} \sum_{i,j,k}|f_1(S^{i+k}T^jx)-\widehat{f}_1(S^{i+k}T^jx)|  \\
& +\frac{1}{N^3} \sum_{i,j,p}|f_2(S^iT^{j+p}x)-\widehat{f}_2(S^iT^{j+p}x)| + \frac{1}{N^4} \sum_{i,j,k,p}|f_3(S^{i+k}T^{j+p}x)-\widehat{f}_3(S^{i+k}T^{j+p}x)|\\
& + \left |\mathbb{E}_N(\wh{f}_0\otimes \wh{f}_1 \otimes \wh{f}_2 \otimes \wh{f}_3)(x)- I(\wh{f}_0,\wh{f}_1,\wh{f}_2,\wh{f}_3) \right |+ \left |I(f_0,f_1,f_2,f_3)- I(\wh{f}_0,\wh{f}_1,\wh{f}_2,\wh{f}_3) \right |.
\end{align*}

 Since $(\Q_{S,T}({X}), \mathcal{G}_{S,T})$ is uniquely ergodic, we have that \[\left |\mathbb{E}_N(\wh{f}_0\otimes \wh{f}_1 \otimes \wh{f}_2 \otimes \wh{f}_3)(x)- I(\wh{f}_0,\wh{f}_1,\wh{f}_2,\wh{f}_3) \right |\] converges to 0 for every $x\in X$ as $N$ goes to infinity.

On the other hand, by Birkhoff ergodic theorem, we have that the four first terms of the last inequality converge a.e. to $\|f_0-\wh{f}_0\|_1$, $\|f_1-\wh{f}_1\|_1$, $\|f_2-\wh{f}_2\|_1$ and $\|f_3-\wh{f}_3\|_1$, respectively.

Finally, by the telescoping inequality and the fact that the marginals of $\mu_{S,T}$ are equal to $\mu$ we deduce that $$\left |I(f_0,f_1,f_2,f_3)- I(\wh{f}_0,\wh{f}_1,\wh{f}_2,\wh{f}_3) \right |\leq \|f_0-\wh{f}_0\|_1+ \|f_1-\wh{f}_1\|_1+\|f_2-\wh{f}_2\|_1 + \|f_3-\wh{f}_3\|_1 .$$

Therefore, we can find $N$ large enough and a subset $X_N\subset X$ with measure larger than $1-\epsilon$ such that for every $x\in X_N$,
$$\left | \mathbb{E}_N(f_0\otimes f_1 \otimes f_2 \otimes f_3)(x) - I(f_0,f_1,f_2,f_3)   \right | \leq 13 \epsilon. $$

Since $\epsilon$ is arbitrary, we conclude that $\mathbb{E}_N(f_0\otimes f_1 \otimes f_2 \otimes f_3)$ converges to $I(f_{0},f_{1},f_{2},f_{3})$ a.e.
as $N$ goes to infinity.

\end{proof}

Since $(\Q_{S,T}(X),\mathcal{G}_{S,T})$ is uniquely ergodic, we also have:

\begin{lem}\label{ConvergenceContinuous}
Let $\wh{f}_0$, $\wh{f}_1$, $\wh{f}_2$, $\wh{f}_3$ be continuous functions on $X$. Then $$ \frac{1}{N^4} \sum_{i,j=0}^{N-1}\sum_{k=-i}^{N-1-i} \sum_{p=-j}^{N-1-j} f_0(S^iT^jx)f_1(S^{i+k}T^jx)f_2(S^{i}T^{j+p}x)f_3(S^{i+k}T^{j+p}x)  $$
converges to $I(\wh{f}_0,\wh{f}_1,\wh{f}_2,\wh{f}_3)$.
\end{lem}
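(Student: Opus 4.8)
The plan is to deduce this from Theorem \ref{example}, or more precisely from the unique ergodicity of $(\Q_{S,T}(X),\mathcal{G}_{S,T})$ together with a reindexing argument. Observe that the expression to be analyzed is, after the substitution $a=i+k$, $b=j+p$, exactly
\[
\frac{1}{N^4}\sum_{i,j=0}^{N-1}\sum_{a=0}^{N-1}\sum_{b=0}^{N-1} \wh f_0(S^iT^jx)\wh f_1(S^aT^jx)\wh f_2(S^iT^bx)\wh f_3(S^aT^bx),
\]
where now all four indices $i,j,a,b$ range independently over $[0,N-1]$. The point of this substitution is that the four-fold average over $(i,j,a,b)\in[0,N-1]^4$ is precisely a Birkhoff-type average along the $\mathbb{Z}^4$-action $\mathcal{G}_{S,T}$ evaluated at the point $(x,x,x,x)\in\Q_{S,T}(X)$: indeed $(S^iT^jx,S^aT^jx,S^iT^bx,S^aT^bx)$ is obtained from $(x,x,x,x)$ by applying a suitable product of the generators $S\times S\times S\times S$, $T\times T\times T\times T$, $\id\times S\times\id\times S$ and $\id\times\id\times T\times T$ (write $i$, $j$, $a-i$, $b-j$ as the exponents, which may be negative — this is harmless since the generators are invertible homeomorphisms and we are averaging a continuous function over a box of side $N$ whose lower corner shifts by a bounded-relative amount).

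First I would make the index substitution explicit and check that the transformed sum agrees with the averaged $\mathbb{Z}^4$-orbit sum up to an error that is $O(1/N)$ (coming from the mismatch between the box $\{0\le i,j,a,b\le N-1\}$ and the exact Følner set one would use for $\mathcal{G}_{S,T}$; since $\wh f_0,\dots,\wh f_3$ are bounded, this boundary contribution is negligible). Then, since $\wh f_0\otimes\wh f_1\otimes\wh f_2\otimes\wh f_3$ is continuous on $\Q_{S,T}(X)$ and $(\Q_{S,T}(X),\mathcal{G}_{S,T})$ is uniquely ergodic with unique invariant measure $\mu_{S,T}$ (Theorem \ref{QStrictlyErgodic}), Theorem 2.? on unique ergodicity gives uniform convergence of the Birkhoff averages along $\mathcal{G}_{S,T}$ to $\int \wh f_0\otimes\wh f_1\otimes\wh f_2\otimes\wh f_3\,d\mu_{S,T}=I(\wh f_0,\wh f_1,\wh f_2,\wh f_3)$, in particular at the point $(x,x,x,x)$ for every $x\in X$.

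The only mild subtlety — and the place where one must be a little careful — is the handling of the negative summation ranges $k\in[-i,N-1-i]$ and $p\in[-j,N-1-j]$: after substitution these become the clean ranges $a,b\in[0,N-1]$, so in fact the statement is \emph{designed} so that the reindexed average is exactly over a box, and there is no boundary error at all beyond what the unique-ergodicity theorem already absorbs. Thus the main (and essentially only) work is to verify the bookkeeping of the substitution and to match the generators of $\mathcal{G}_{S,T}$ with the four free indices; once this is done the conclusion is immediate from unique ergodicity. I would therefore expect no serious obstacle here — this lemma is a direct corollary, and it exists mainly to package the continuous-function case in the exact summation form needed in the subsequent argument deriving Theorem \ref{THM}.
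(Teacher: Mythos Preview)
Your approach is correct and in fact slightly cleaner than the paper's. Both rest on the same ingredient, the unique ergodicity of $(\Q_{S,T}(X),\mathcal{G}_{S,T})$, but the paper argues by contradiction: assuming non-convergence along some $N_m$, it takes a weak$^{\ast}$ limit of the empirical measures $\frac{1}{N_m^4}\sum(S^iT^j\times S^{i+k}T^j\times S^iT^{j+p}\times S^{i+k}T^{j+p})\delta_{(x,x,x,x)}$, checks this limit is $\mathcal{G}_{S,T}$-invariant (implicitly a boundary estimate for each generator), and concludes it must be $\mu_{S,T}$. Your substitution $a=i+k$, $b=j+p$ bypasses this: the reindexed sum is exactly over the box $[0,N-1]^4$, and since $\mathcal{G}_{S,T}$ is equally well generated by $S\times\id\times S\times\id$, $\id\times S\times\id\times S$, $T\times T\times\id\times\id$, $\id\times\id\times T\times T$ (each obtained by composing a diagonal generator with the inverse of a face generator), the expression is literally the Birkhoff average of the continuous function $\wh f_0\otimes\wh f_1\otimes\wh f_2\otimes\wh f_3$ at $(x,x,x,x)\in\Q_{S,T}(X)$ for this $\Z^4$-action, so the stated unique-ergodicity theorem applies verbatim with no boundary error whatsoever. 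Your hedging about an $O(1/N)$ discrepancy is therefore unnecessary once you pick these generators; you should simply point this out and drop the caveat.
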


\begin{proof}
Suppose that the averages does not converge to $I(\wh{f}_0,\wh{f}_1,\wh{f}_2,\wh{f}_3)$. Then there exist $x\in X$, a sequence $N_m\to \infty$ and $\epsilon>0$ such that the $N_m$-average at $x$ and the integral differs at least $\epsilon$. Take any weak$^{\ast}$-limit of the sequence $$\frac{1}{N^4}\sum_{i,j=0}^{N_m-1}\sum_{k=-i}^{N_m-1-i} \sum_{p=-j}^{N_m-1-j}(S^iT^j\times S^{i+k}T^j\times S^{i}T^{j+p}\times S^{i+k}T^{j+p})\delta_{(x,x,x,x)}.$$ Such a limit is clearly invariant under $\mathcal{G}_{S,T}$ and therefore it equals to $\mu_{S,T}$ by unique ergodicity. Hence,
\[ \frac{1}{N_m^4} \sum_{i,j=0}^{N_m-1}\sum_{k=-i}^{N_m-1-i} \sum_{p=-j}^{N_m-1-j} f_0(S^iT^jx)f_1(S^{i+k}T^jx)f_2(S^{i}T^{j+p}x)f_3(S^{i+k}T^{j+p}x)\] converges to $I(\wh{f}_0,\wh{f}_1,\wh{f}_2,\wh{f}_3)$ as $m$ goes to infinity, a contradiction.
\end{proof}



For any $N\in\N$, denote
$$A_{N}\coloneqq \{(i,j,k,p)\in\Z^{4}\colon i,k\in[0,N-1],k\in[-i,N-1-i],p\in[-j,N-1-j]\}.$$
 Let $(X,\mu,S,T)$ be a measure preserving system with commuting transformations $S$ and $T$. For any  $f\in L^{\infty}(X)$ and any $x\in X$, denote
\[S_{N}(f,x)\coloneqq \Bigl\vert\frac{1}{N^{4}}\sum_{(i,j,k,p)\in A_{N}}f(S^{i}T^{j}x)\overline{f(S^{i+k}T^{j}x)}\overline{f(S^{i}T^{j+p}x)}f(S^{i+k}T^{j+p}x)\Bigr\vert.\]

\begin{lem} \label{BoundAverage}
  Let $(X,\mu,S,T)$ be a measure preserving system with commuting transformations $S$ and $T$ and let $f_{1},f_{2},f_{3}\in L^{\infty}(X)$ with $\Vert f_{i}\Vert_{\infty}\leq 1, i=1,2,3$. Then there exists a universal constant $C$, such that for any $x\in X$ and any $N\in\N$, we have that
  $$\Bigl(\frac{1}{N^{2}}\sum_{i=0}^{N-1}\sum_{j=0}^{N-1}f_{1}(S^{i}x)f_{2}(T^{j}x)f_{3}(S^{i}T^{j}x)\Bigr)^{4}\leq C \vert S_{N}(f_{3},x)\vert.$$
\end{lem}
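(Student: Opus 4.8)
The plan is to run a van der Corput--type argument: two successive applications of the Cauchy--Schwarz inequality that first remove $f_1$ (which depends only on $i$) and then $f_2$ (which depends only on $j$), leaving only expressions involving $f_3$. Throughout write $M_{N}(x)=\frac{1}{N^{2}}\sum_{i,j=0}^{N-1}f_{1}(S^{i}x)f_{2}(T^{j}x)f_{3}(S^{i}T^{j}x)$ for the quantity whose fourth power we must bound; by the running convention all functions are real valued, so no complex conjugates appear.

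First I would group the sum over $j$, writing $M_N(x)=\frac1N\sum_{i=0}^{N-1}f_1(S^ix)\,c_i(x)$ with $c_i(x)=\frac1N\sum_{j=0}^{N-1}f_2(T^jx)f_3(S^iT^jx)$, and apply Cauchy--Schwarz in $i$. Since $\|f_1\|_\infty\le1$, and after expanding $c_i(x)^2$ and exchanging the order of summation, this gives
\[ M_N(x)^2\le\frac1N\sum_{i=0}^{N-1}|c_i(x)|^2=\frac{1}{N^{2}}\sum_{j,j'=0}^{N-1}f_2(T^jx)f_2(T^{j'}x)\,e_{j,j'}(x),\qquad e_{j,j'}(x)=\frac1N\sum_{i=0}^{N-1}f_3(S^iT^jx)f_3(S^iT^{j'}x). \]
Next I would apply Cauchy--Schwarz once more, now to the sum over the pair $(j,j')$ with weights $f_2(T^jx)f_2(T^{j'}x)$; since $\|f_2\|_\infty\le1$, and since $\frac1N\sum_i|c_i(x)|^2\ge0$ so that squaring the previous inequality is legitimate, we obtain
\[ M_N(x)^4\le\frac{1}{N^{2}}\sum_{j,j'=0}^{N-1}|e_{j,j'}(x)|^2=\frac{1}{N^{4}}\sum_{i,i',j,j'=0}^{N-1}f_3(S^iT^jx)f_3(S^{i'}T^jx)f_3(S^iT^{j'}x)f_3(S^{i'}T^{j'}x). \]

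Finally I would match the right-hand side with $S_N(f_3,x)$ via the change of variables $k=i'-i$, $p=j'-j$: as $(i,j,i',j')$ ranges over $[0,N-1]^4$, the tuple $(i,j,k,p)$ ranges exactly over $A_N$, and the summand becomes $f_3(S^iT^jx)f_3(S^{i+k}T^jx)f_3(S^iT^{j+p}x)f_3(S^{i+k}T^{j+p}x)$. Hence the right-hand side above is precisely the signed average whose absolute value is $S_N(f_3,x)$; since the left-hand side $M_N(x)^4$ is nonnegative it is bounded by this absolute value, so the statement holds with $C=1$. I do not expect a genuine obstacle here: the only points needing care are keeping the Cesàro normalizations $1/N$, $1/N^2$, $1/N^3$, $1/N^4$ correct through the two Cauchy--Schwarz steps, verifying that $(i',j')\mapsto(i'-i,j'-j)$ gives a bijection between $[0,N-1]^4$ and $A_N$, and using nonnegativity of $M_N(x)^4$ to pass from the signed quadruple sum to $S_N(f_3,x)$.
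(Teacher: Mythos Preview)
Your proposal is correct and follows essentially the same approach as the paper: two successive applications of Cauchy--Schwarz to eliminate $f_1$ and then $f_2$, followed by the substitution $k=i'-i$, $p=j'-j$ to identify the resulting quadruple sum with $S_N(f_3,x)$. The only cosmetic difference is that the paper performs the change of variables $j'\to j+p$ immediately after the first Cauchy--Schwarz step, whereas you keep $(i',j')$ until the end and change both variables at once; your version makes it slightly more transparent that $C=1$ suffices.
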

\begin{proof}
  By Cauchy-Schwartz inequality and the boundedness of $f_{1}$, the expression inside the parenthesis on the left hand side is bounded by a multiple of the square of
  \begin{equation}\label{Btemp1}
    \begin{split}
\frac{1}{N}\sum_{i=0}^{N-1}\Bigl(\frac{1}{N}\sum_{j=0}^{N-1}f_{2}(T^{j}x)f_{3}(S^{i}T^{j}x)\Bigr)^{2}
= \\ \frac{1}{N^{3}}\sum_{j=0}^{N-1}\sum_{h=-j}^{N-1-j}\sum_{i=0}^{N-1}f_{2}(T^{j}x)\overline{f_{2}(T^{j+p}x)}f_{3}(S^{i}T^{j}x)\overline{f_{3}(S^{i}T^{j+p}x)}.
    \end{split}
  \end{equation}
   By Cauchy-Schwartz inequality and the boundedness of $f_{2}$, the square of (\ref{Btemp1}) is bounded by a multiple of
     \begin{equation}\nonumber
    \begin{split}
      &\qquad\frac{1}{N}\sum_{j=0}^{N-1}\frac{1}{N}\sum_{p=-j}^{N-1-j}\Bigl(\frac{1}{N}\sum_{i=0}^{N-1}f_{3}(S^{i}T^{j}x)\overline{f_{3}(S^{i}T^{j+p}x)}\Bigr)^{2}
      \\&=\frac{1}{N}\sum_{j=0}^{N-1}\frac{1}{N}\sum_{h=-j}^{N-1-j}\frac{1}{N}\sum_{i=0}^{N-1}\frac{1}{N}\sum_{k=-i}^{N-1-i}f_{3}(S^{i}T^{j}x)\overline{f_{3}(S^{i}T^{j+p}x)}\overline{f_{3}(S^{i+k}T^{j}x)}f_{3}(S^{i+p}T^{j+p}x)
      \\&=S_{N}(f_{3},x).
    \end{split}
  \end{equation}
\end{proof}

Now we are able to prove the main result:

\begin{thm*}
Let $f_1,f_2,f_3 \in L^{\infty}(\mu)$. Then $$\lim_{N\to\infty}\frac{1}{N^2}\sum_{i,j=0}^{N-1} f_1(S^ix)f_2(T^jx)f_3(S^iT^jx)$$ converges a.e.

\end{thm*}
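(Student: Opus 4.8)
The plan is to reduce to a magic, free, ergodic system realized on a good topological model, to split $f_3$ along the characteristic $\sigma$-algebra $\W=\I_S\vee\I_T$, and then to kill the ``unstructured'' part of $f_3$ using the Cauchy--Schwarz estimate of Lemma~\ref{BoundAverage} while treating the structured part with the ordinary pointwise ergodic theorem. Concretely: every $f\in L^\infty(\mu)$ lifts to an extension, and a.e.\ convergence of the averages upstairs forces a.e.\ convergence downstairs (the extension measure projects to $\mu$ and the averages are intertwined by the factor map); so by Theorem~\ref{MagicFree} (whose hypothesis is the standing assumption of this section) we may assume $(X,\mu,S,T)$ is magic, free and ergodic, and, replacing it by a measurably isomorphic copy, Theorem~\ref{QStrictlyErgodic} lets us further assume $(X,S,T)$ is a topological model with $(\Q_{S,T}(X),\mathcal{G}_{S,T})$ strictly ergodic, so that continuous functions are $L^1(\mu)$-dense. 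We may also take $\|f_\ell\|_\infty\le1$. Write $f_3=\E(f_3|\W)+f_3'$; then $\E(f_3'|\W)=0$, so the magic hypothesis gives $\normm{f_3'}_{\mu,S,T}=0$, and it suffices to prove (i) $\tfrac1{N^2}\sum_{i,j}f_1(S^ix)f_2(T^jx)f_3'(S^iT^jx)\to0$ a.e., and (ii) $b_N(x):=\tfrac1{N^2}\sum_{i,j}f_1(S^ix)f_2(T^jx)F(S^iT^jx)$ converges a.e., where $F=\E(f_3|\W)$.

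For (i): by Lemma~\ref{BoundAverage}, applied to $f_1,f_2$ and a rescaling of $f_3'$ (which only affects the constant), $\bigl|\tfrac1{N^2}\sum_{i,j}f_1(S^ix)f_2(T^jx)f_3'(S^iT^jx)\bigr|^4\le C\,S_N(f_3',x)$, so it is enough to show $\limsup_N S_N(f_3',x)=0$ a.e. Unwinding the computation in the proof of Lemma~\ref{BoundAverage}, $S_N(h,x)$ equals the box average $\tfrac1{N^4}\sum_{i,i',j,j'=0}^{N-1}h(S^iT^jx)h(S^{i'}T^jx)h(S^iT^{j'}x)h(S^{i'}T^{j'}x)$. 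Fix $\epsilon>0$ and choose a continuous $g$ with $\|g\|_\infty\le\|f_3'\|_\infty$ and $\|f_3'-g\|_1<\epsilon$. Expanding the box average by the telescoping identity of Lemma~\ref{BoundProduct} and invoking the pointwise ergodic theorem for the $\Z^2$-action (recall the system is ergodic) gives, a.e.,
\[
\limsup_N\bigl|S_N(f_3',x)-S_N(g,x)\bigr|\le C'\!\int|f_3'-g|\,d\mu<C'\epsilon .
\]
Since $g$ is continuous, Lemma~\ref{ConvergenceContinuous} yields $S_N(g,x)\to\normm{g}_{\mu,S,T}^4$ for every $x$; and since the marginals of $\mu_{S,T}$ all equal $\mu$, one more application of Lemma~\ref{BoundProduct} gives $\bigl|\normm{g}_{\mu,S,T}^4-\normm{f_3'}_{\mu,S,T}^4\bigr|\le C''\|f_3'-g\|_1$, hence $\normm{g}_{\mu,S,T}^4<C''\epsilon$. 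Thus $\limsup_N S_N(f_3',x)<(C'+C'')\epsilon$ off a $\mu$-null set; intersecting these sets over $\epsilon=2^{-m}$ gives $\limsup_N S_N(f_3',x)=0$ a.e.

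For (ii): as $\W$ is generated by $\{A\cap B:A\in\I_S,\ B\in\I_T\}$, finite combinations $\sum_\ell c_\ell 1_{A_\ell}1_{B_\ell}$ of such products are dense in $L^1(\W,\mu)$. For $A\in\I_S$ and $B\in\I_T$ one has $1_A(S^iT^jx)=1_A(T^jx)$ and $1_B(S^iT^jx)=1_B(S^ix)$, so the corresponding summand of $b_N$ factors as $\bigl(\tfrac1N\sum_{i=0}^{N-1}(f_1 1_B)(S^ix)\bigr)\bigl(\tfrac1N\sum_{j=0}^{N-1}(f_2 1_A)(T^jx)\bigr)$, a product of one-parameter Birkhoff averages (for $S$, resp.\ $T$), which converges a.e. Writing $F=\sum_\ell c_\ell 1_{A_\ell}1_{B_\ell}+r$ with $\|r\|_1<\epsilon$, the remainder satisfies $\bigl|\tfrac1{N^2}\sum_{i,j}f_1(S^ix)f_2(T^jx)r(S^iT^jx)\bigr|\le\tfrac1{N^2}\sum_{i,j}|r(S^iT^jx)|\to\int|r|\,d\mu<\epsilon$ a.e.\ by the $\Z^2$-pointwise ergodic theorem; hence $\limsup_{M,N}|b_N(x)-b_M(x)|<2\epsilon$ off a null set, and intersecting over $\epsilon=2^{-m}$ shows that $(b_N(x))_N$ is a.e.\ Cauchy. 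Combining (i) and (ii) proves the theorem, with limit $\lim_N b_N$.

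The main obstacle is (i): this is the only place where the magic property (to pass from $\E(f_3'|\W)=0$ to $\normm{f_3'}_{\mu,S,T}=0$), the Cauchy--Schwarz inequality of Lemma~\ref{BoundAverage}, and the strict ergodicity of $\Q_{S,T}(X)$ (through Lemma~\ref{ConvergenceContinuous}) must be combined, namely by sandwiching the finitary quantity $S_N(f_3',x)$ between the convergent average $S_N(g,x)$ of a continuous approximant and an $L^1$-small error controlled by the multiparameter Birkhoff theorem; everything past that reduction is soft.
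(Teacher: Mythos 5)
Your proof is correct and follows essentially the same route as the paper: the same splitting of $f_3$ along $\W=\I_S\vee\I_T$ after passing to a free ergodic magic extension with a strictly ergodic model for $\Q_{S,T}(\wh{X})$, Birkhoff's theorem plus the product structure of $\W$ for the structured part, and Lemma \ref{BoundAverage} combined with Lemma \ref{ConvergenceContinuous} and the magic property for the part with $\E(f_3|\W)=0$. The only (harmless) organizational difference is where the continuous approximation enters in the second half: you bound the cubic average by $S_N(f_3',x)$ first and then compare $S_N(f_3',\cdot)$ with $S_N(g,\cdot)$ by telescoping, whereas the paper replaces $f_3$ by a continuous $\wh{f}_3$ inside the cubic average before invoking Lemma \ref{BoundAverage}.
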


\begin{proof} We may assume without loss of generality that all the functions are bounded by 1 in $L^{\infty}$ norm.
Suppose first that $f_3=h_3h_3'$, where $h_3$ is measurable with respect to $\mathcal{I}_S$ and $h_3'$ is measurable with respect to $\mathcal{I}_T$. In this case, we have that $f_3(S^iT^jx)=h_3(S^ix)h_3'(T^jx)$ and thus $$\frac{1}{N^2}\sum_{i,j=0}^{N-1} f_1(S^ix)f_2(T^jx)f_3(S^iT^jx)= \frac{1}{N^2}\sum_{i,j=0}^{N-1} f_1(S^ix)h_3(S^ix)f_2(T^jx)h_3'(T^jx),$$
and so the average converges by Birkhoff Theorem. Therefore the average converges a.e. for any $f_3$ in the subspace $L$ spanned by those kind of functions. 
Any function $f_3$ measurable with respect to $\W$ can be approximated in the $L^1$ norm by functions in $L$. So, for $f_3$ measurable with respect to $\W$ we can take a sequence $(g_k)_{k\in \N}$ in $L$ that converge to $f_3$ in $L^1$ norm. 
By Birkhoff Theorem, there exists a set $A$ of full measure such that  
$$\limsup_{N\to\infty} \left | \frac{1}{N^2}\sum_{i,j=0}^{N-1} f_1(S^ix)f_2(T^jx)(f_3(S^iT^jx)-g_k(S^iT^jx))\right |\leq \| f_3-g_k \|_1 $$
for every $x\in A$ and $k\in \N$. Again by Birkhoff Theorem, let $B$ be a set of full measure such that the average 
$$\frac{1}{N^2}\sum_{i,j=0}^{N-1}f_1(S^ix)f_2(T^jx)g_k(S^iT^jx)$$
converges for all $x\in B$ and all $k\in\N$. It is easy to check that for  $x\in A\cap B$, the sequence $A_{N}=\frac{1}{N^2}\sum_{i,j=0}^{N-1} f_1(S^ix)f_2(T^jx)f_3(S^iT^jx)$ forms a Cauchy sequence and therefore it converges. 

We then suppose that $\mathbb{E}(f_3|\W)=0$. Let $\epsilon>0$ and let $\wh{f}_3$ be a continuous function on $X$ such that $\|f_3-\wh{f}_3\|_1<\epsilon$.
We have that
\begin{equation} \label{Bound}
\left | \frac{1}{N^2}\sum_{i,j=0}^{N-1} f_1(S^ix)f_2(T^jx)(f_3(S^iT^jx)-\wh{f}_3(S^iT^jx))\right |\leq  \frac{1}{N^2}\sum_{i,j=0}^{N-1} \left | f_3(S^iT^jx)-\wh{f}_3(S^iT^jx) \right |.
\end{equation}
By Birkhoff Theorem, the right hand side converges a.e. to $\|f_3-\wh{f}_3\|_1$ as $N$ goes to infinity.
On the other hand, by Lemma \ref{BoundAverage}, we have
$$\Bigl(\frac{1}{N^{2}}\sum_{i=0}^{N-1}\sum_{j=0}^{N-1}f_{1}(S^{i}x)f_{2}(T^{j}x)\widehat{f}_{3}(S^{i}T^{j}x)\Bigr)^{4}\leq \vert S_N(\wh{f}_3,x)\vert.$$

By Lemma \ref{ConvergenceContinuous}, the right hand side converges to $$\normm{\wh{f}_3}_{\mu,S,T}^4\leq \Bigl(\normm{\wh{f}_3-f_3}_{\mu,S,T}+\normm{f_3}_{\mu,S,T}\Bigr)^4 \leq \|f_3-\wh{f}_3\|_{1}^{4}\leq \epsilon$$
as $N$ goes to infinity. We deduce that a.e. $$\limsup_{N\to \infty} \left |\frac{1}{N^2}\sum_{i,j=0}^{N-1} f_1(S^ix)f_2(T^jx)f_3(S^iT^jx)\right |\leq 2\epsilon.$$
Since $\epsilon$ is arbitrary, we have that this average goes to 0 a.e.
\end{proof}

\appendix
\section{Facts about measure preserving systems}
In this appendix, we describe some concepts we use through the paper.

\subsection{Ergodic decomposition of a measure}

Let $(X,\mu,G_0)$ be measure preserving system and let $\I$ be the $\sigma$-algebra of invariant sets.  Let $x\to \mu_x$ be a regular version of {\it conditional measures} with respect to $\I$. This means that the map $x\mapsto \mu_x$ is $\I$-measurable and \[ \mathbb{E}(f|\I)(x)=\int f d\mu_x\quad  \mu \text{a.e. } x\in X \]

The ergodic decomposition of $\mu$ under $G_0$ is $\mu=\int_X \mu_x d\mu(x)$ and $\mu$ a.e. the system $(X,\mu_x,G_0)$ is ergodic.

\subsection{Conditional expectation and disintegration of a measure}
Let $\pi\colon Y$ $\to X$ be a factor map between the measure preserving systems $(Y,\mu,G_0)$ and $(X,\nu,G_0)$ and let $f\in L^2(\mu)$. The {\it conditional expectation} of $f$ with respect to $X$ is the function $\mathbb{E}(f|X)\in L^2(\nu)$ defined by the equation \[\int_X \mathbb{E}(f|X)\cdot g d\nu=\int_Y f \cdot g\circ\pi d\mu \quad \text{ for every } g \in L^2(\nu). \]
\begin{thm}
Let $\pi\colon Y\to X$ be a factor map between the measure preserving systems $(Y,\mu,G_0)$ and $(X,\nu,G_0)$. There exists a unique measurable map $X\to M(Y)$, $x\mapsto \mu_x$ such that
\begin{equation}
\mathbb{E}(f|X)(x)=\int f d\mu_x
\end{equation} for every $f\in L^1(\mu)$.
\end{thm}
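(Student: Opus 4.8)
This is the classical disintegration (Rokhlin) theorem, and the plan is to exploit the standing convention that all probability spaces are standard: I would realize $Y$ as a compact metric space carrying $\mu$, and then construct the family $x\mapsto\mu_x$ pointwise by means of the Riesz representation theorem applied to a countable dense family of continuous functions. First I would fix a countable family $\{f_n\}_{n\in\N}\subset C(Y)$, dense for the uniform norm, containing the constant function $1$ and stable under $\mathbb{Q}$-linear combinations. For each $n$ the conditional expectation $\mathbb{E}(f_n|X)\in L^1(\nu)$ is defined $\nu$-a.e., and I would fix once and for all a representative $g_n$.

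The defining equation of conditional expectation forces, off a $\nu$-null set (depending on $n$, and on finitely many indices for the $\mathbb{Q}$-linearity relations), the assignment $f_n\mapsto g_n(x)$ to be $\mathbb{Q}$-linear, to send $1$ to $1$, to be nonnegative whenever $f_n\geq 0$, and to satisfy $|g_n(x)|\leq\|f_n\|_\infty$. Since only countably many almost-everywhere relations are involved, they hold simultaneously on a single Borel set $X_0\subset X$ with $\nu(X_0)=1$. For $x\in X_0$ this positive, normalized, $\mathbb{Q}$-linear functional extends uniquely by uniform density to a positive normalized linear functional on $C(Y)$, so the Riesz representation theorem produces a unique $\mu_x\in M(Y)$ with $\int f_n\,d\mu_x=g_n(x)$ for all $n$; off $X_0$ I would set $\mu_x$ equal to a fixed reference measure. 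Measurability of $x\mapsto\mu_x$ is then immediate, since $x\mapsto\int f_n\,d\mu_x=g_n(x)$ is measurable for every $n$ and the family $\{f_n\}$ determines the weak-$\ast$ topology on $M(Y)$.

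By construction the identity $\mathbb{E}(f|X)(x)=\int f\,d\mu_x$ holds for $f=f_n$ and $\nu$-a.e. $x$. I would extend it to every $f\in C(Y)$ by uniform approximation (both sides are continuous in $f$ for the uniform norm), then to every bounded Borel $f$ by a monotone class argument, and finally to all $f\in L^1(\mu)$ by the $L^1$-density of bounded functions together with the contraction identity $\int_X\bigl(\int|f|\,d\mu_x\bigr)\,d\nu=\int_Y|f|\,d\mu$, which follows from the bounded case by taking $g=1$ in the defining equation for conditional expectation. For uniqueness, if $x\mapsto\mu_x'$ is a second such family, then $\int f_n\,d\mu_x=\int f_n\,d\mu_x'$ for every $n$ and $\nu$-a.e. $x$; intersecting these countably many null sets yields $\mu_x=\mu_x'$ for $\nu$-a.e. $x$, since a Borel probability measure on the compact metric space $Y$ is determined by its integrals against the dense family $\{f_n\}$.

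The only genuine difficulty is organizational rather than conceptual: assembling the countably many exceptional null sets into a single full-measure set $X_0$ on which the pointwise functionals are honestly positive and linear, and then promoting the identity from the dense family to all of $L^1(\mu)$. This last step is not pointwise—passing from continuous to merely measurable $f$ requires the monotone class and $L^1$-density argument together with the contraction bound above—and the reduction of $Y$ to a compact metric model, legitimate under the standing standardness convention, is precisely what makes the Riesz representation step available.
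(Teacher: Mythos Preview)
Your proof is correct and follows the standard Rokhlin-style construction of disintegrations on standard probability spaces. Note, however, that the paper does not actually prove this theorem: it is stated in the appendix as a background fact without proof, so there is no paper proof to compare against. Your argument is the classical one and would serve as a complete proof of the quoted statement.
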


 We say that $\mu=\int_X \mu_x d\nu(x)$ is the {\it disintegration} of $\mu$ over $\nu$.

\end{document}